\newtheorem{theorem}{Theorem}[section]
\newtheorem{claim}{Claim}%[section]
\newtheorem{case}{Case}%[section]
\newtheorem{lemma}[theorem]{Lemma}
\newtheorem{proposition}[theorem]{Proposition}
\theoremstyle{definition}
\theoremstyle{remark}
\newtheorem{remark}[theorem]{Remark}
\numberwithin{equation}{section}
\newcommand*\xbar[1]{%
	\hbox{%
		\vbox{%
			\hrule height 0.5pt % The actual bar
			\kern0.5ex%         % Distance between bar and symbol
			\hbox{%
				\kern-0.1em%      % Shortening on the left side
				\ensuremath{#1}%
				\kern-0.1em%      % Shortening on the right side
			}%
		}%
	}%
}
\newcommand{\g}{\dot{\gamma}}
\newcommand{\p}{\dot{\varphi}}
\newcommand{\f}{\dot{f}}
\begin{document}
	\title{On complete gradient Schouten solitons}

%    Information for first author
\author{Valter Borges}
%    Address of record for the research reported here
\address{Department of Mathematics, 70910-900, Bel\'em-PA, Brazil}
%    Current address
%\curraddr{Department of Mathematics and Statistics,
%Case Western Reserve University, Cleveland, Ohio 43403}
\email{valterborges@mat.ufpa.br}
%    \thanks will become a 1st page footnote.
%\thanks{The author was supported in part by CNPq Proc. 248877/2013-5  Ministry of Science and Technology, Brazil.}

%    General info
\subjclass[2000]{Primary 35Q51, 53B20, 53C20, 53C25; Secondary 34A40}

\date{\today}

\keywords{Schouten Soliton, Schouten Flow, Gradient Estimate, Volume Estimate}
\begin{abstract}	
	In this paper, we prove an optimal inequality between the potential function of a complete Schouten soliton and the norm of its gradient. We also prove that these metrics have bounded scalar curvature of defined sign. As an application, we prove that the potential function of a Shrinking Schouten soliton grows linearly and provide optimal estimates for the growth of the volume of geodesic balls.
\end{abstract}

%\begin{abstract}	
%	In this paper, we prove two optimal inequalities. The first is an inequality between the potential function of a complete Schouten soliton and the norm of its gradient. The second is equivalent to these metrics having bounded scalar curvature and defined sign. As applications, we prove that the potential function of a Shrinking Schouten soliton grows linearly and provide optimal estimates for the growth of the volume of geodesic balls.	
%\end{abstract}
\maketitle

\section{Introduction and Main Results}
We say that a Riemannian manifold $(M^n,g)$ is a {\it gradient Schouten soliton} if there are $f\in C^{\infty}(M)$, called the {\it potential function}, and $\lambda\in\mathbb{R}$ satisfying
\begin{align}\label{fundeq}
	Ric+\nabla\nabla f=\left(\frac{R}{2(n-1)}+\lambda\right)g.
\end{align}
Here $\nabla\nabla f$ is the Hessian of $f$, $R$ is the scalar curvature and $Ric$ is the Ricci tensor of $M$. The soliton is called {\it shrinking}, {\it steady} or {\it expanding}, provided $\lambda$ is positive, zero or negative, respectively. In this case we use the notation $(M^n,g,f,\lambda)$.

More generally, a gradient $\rho$-Einstein manifold, $\rho\in\mathbb{R}$, is a Riemannian manifold $(M,g)$ for which a similar equation holds, namely when
\begin{align}\label{genfundeq}
	Ric+\nabla\nabla f=\left(\rho R+\lambda\right)g
\end{align}
is satisfied for $f\in C^{\infty}(M)$, also called {\it potential function}, and $\lambda\in\mathbb{R}$. With this terminology, Schouten solitons are $1/2(n-1)$-Einstein manifolds. The most notable case occurs when $\rho=0$. This choice leads to Ricci solitons, which are self-similar solutions of the Ricci flow, and the latter played a crucial rule in the proof of the Poincaré conjecture. Examples of $\rho$-Einstein manifolds were given in \cite{catino} by Catino and Mazzieri, where this concept was introduced.

It was shown in \cite{catino1} that for each $\rho$ these metrics are realized as self-similar solutions of a geometric flow known as Ricci-Bourguignon flow (associated to the fixed $\rho$). Such flow was shown to have short time solution for any initial metric whenever $\rho<1/2(n-1)$ and $M$ is compact (see \cite{catino2} and \cite{hamilton1} for more details). On the other hand, if $\rho>1/2(n-1)$ solutions are not likely to exist for any initial metric, even when $M$ is compact \cite{catino2}. It is worth noticing that it is still not known what happens concerning short time existence when $\rho=1/2(n-1)$. This makes this value special.

%Thus, besides detecting which geometries allow equation $(\ref{fundeq})$ to have solution, another motivation to investigate Schouten solitons is that, as in the Ricci flow setting, it is expected that a solid knowledge on self-similar solutions of $(\ref{SCFLeq})$ may help to understand a general solution of it and, perhaps, help to solve Question \ref{question}.

Concerning examples of Schouten solitons, the simplest ones are Einstein manifolds. Other examples are obtained as follows.	Given $n\geq3$, $k\leq n$ and $\lambda\in\mathbb{R}$, consider an Einstein manifold $(N^{k},g)$ of dimension $k\leq n$ and scalar curvature
	\begin{align}\label{scacurv}
		R_{N}=\frac{2(n-1)k\lambda}{2(n-1)-k}.
	\end{align}
Now, if $(x,p)\in\mathbb{R}^{n-k}\times N^{k}$, $\|x\|^2$ denotes the Euclidean norm, and
	\begin{align}\label{potfuc}
		f(x,p)=\frac{1}{2}\left(\frac{R_{N}}{2(n-1)}+\lambda\right)\|x\|^2	,
	\end{align}
it follows that $(\mathbb{R}^{n-k}\times_{\Gamma} N^k,g,f,\lambda)$ is an $n$ dimensional Schouten soliton, where $g=\left\langle ,\right\rangle+g_{N}$, and $\Gamma$ acts freely on $N$ and by orthogonal transformations on $\mathbb{R}^{n-k}$.

%The simplest examples of Schouten solitons are Einstein manifolds. Other examples are obtained as follows. Given $n\geq3$, $k\leq n$ and $\lambda\in\mathbb{R}$, consider an Einstein manifold $(N^{k},g)$ of dimension $k\leq n$ and scalar curvature $R_{N}=\frac{2(n-1)k\lambda}{2(n-1)-k}$, and the potential function $f(x,p)=\frac{1}{2}\big(\frac{R_{N}}{2(n-1)}+\lambda\big)\|x\|^2$ (Euclidean norm), $(x,p)\in\mathbb{R}^{n-k}\times N^{k}$. Then $(\mathbb{R}^{n-k}\times_{\Gamma} N^k,g,f,\lambda)$ is an $n$ dimensional Schouten soliton, where $g=\left\langle ,\right\rangle+g_{N}$ and $\Gamma$ acts freely on $N$ and by orthogonal transformations on $\mathbb{R}^{n-k}$.

A Schouten soliton is called {\it rigid} if it is isometric to one of those described above \cite{petersen}. In \cite{catino} the following classes of Schouten solitons were proven to be rigid:
\begin{enumerate}
	\item {\it compact};% Case where it is Einstein;
	\item {\it complete noncompact with $\lambda=0$ and $n\geq3$};%. In this case it is Ricci flat;
	\item {\it complete noncompact with $\lambda>0$ and $n=3$}.
\end{enumerate}
Motivated by this classification, in this paper we investigate complete noncompact Schouten solitons. Our main result is the following one.

\begin{theorem}\label{maintheorem}
	Let $(M^n,g,f,\lambda)$, $\lambda\neq0$, be a complete noncompact Schouten soliton with $f$ nonconstant. If $\lambda>0$ $($$\lambda<0$, respectively$)$, then the potential function $f$ attains a global minimum $($maximum, respectively$)$ and is unbounded above $($below, respectively$)$. Furthermore,
	\begin{align}\label{positivity}
		0\leq \lambda R\leq2(n-1)\lambda^2,
	\end{align}
	\begin{align}\label{gradestimate}
		2\lambda(f-f_{0})\leq |\nabla f|^2\leq4\lambda(f-f_{0}),
	\end{align}
	with $f_{0}=\displaystyle\min_{p\in M}f(p)$, if $\lambda>0$ $($$f_{0}=\displaystyle\max_{p\in M}f(p)$, if $\lambda<0$, respectively$)$.
\end{theorem}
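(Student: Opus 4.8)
Throughout I take $\lambda>0$; the case $\lambda<0$ is entirely analogous, with all the inequalities reversed and gradient flow replaced by anti‑gradient flow.

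\emph{Step 1: pointwise identities.} Tracing \eqref{fundeq} gives $\Delta f=-\tfrac{n-2}{2(n-1)}R+n\lambda$. Taking the divergence of \eqref{fundeq}, and using the contracted second Bianchi identity $\operatorname{div}Ric=\tfrac12\nabla R$ together with the commutation $\operatorname{div}(\nabla\nabla f)=\nabla\Delta f+Ric(\nabla f)$, every term in $\nabla R$ cancels — precisely because $\rho=\tfrac1{2(n-1)}$ — leaving $Ric(\nabla f)=0$; this is the structural feature of Schouten solitons that drives everything. Feeding $\nabla f$ into \eqref{fundeq} and using $Ric(\nabla f)=0$ yields $\tfrac12\nabla|\nabla f|^2=\mu\,\nabla f$, where $\mu:=\tfrac{R}{2(n-1)}+\lambda$. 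Finally, comparing the Bochner formula for $|\nabla f|^2$ with $\tfrac12\Delta|\nabla f|^2=\operatorname{div}(\mu\nabla f)$, $\Delta f=n\mu-R$ and $|\nabla\nabla f|^2=|\mu g-Ric|^2=n\mu^2-2\mu R+|Ric|^2$ (using again $Ric(\nabla f)=0$ to kill the $Ric(\nabla f,\nabla f)$ term) gives the first‑order identity $\langle\nabla R,\nabla f\rangle=2|Ric|^2-2\mu R$. Since $\nabla f$ is a null eigenvector of $Ric$, Cauchy–Schwarz on its $(n-1)$‑plane gives $|Ric|^2\ge\tfrac{R^2}{n-1}$ wherever $\nabla f\neq0$, hence
\[
\langle\nabla R,\nabla f\rangle\ \ge\ \tfrac1{n-1}\,R\bigl(R-2(n-1)\lambda\bigr)\qquad\text{on }\{\nabla f\neq0\}.
\]

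\emph{Step 2: the curvature bound \eqref{positivity}.} Suppose $R(q_0)>2(n-1)\lambda$. Since the solution is real‑analytic (ellipticity of \eqref{genfundeq} after gauge fixing) and $f$ is nonconstant, $\{\nabla f\neq0\}$ is dense, so I may take $q_0$ with $\nabla f(q_0)\neq0$ and flow forward along $\nabla f$. On $\{R>2(n-1)\lambda\}$ one has $\mu>2\lambda>0$, so $R$ and $|\nabla f|^2$ are strictly increasing along the flow line and it never leaves this set; writing $'$ for the derivative along it, the displayed inequality plus $R>2(n-1)\lambda$ give $R'\ge\tfrac1{n-1}(R-2(n-1)\lambda)^2$, so $R\to+\infty$ at some finite time $T$, and from this $R\le\tfrac{n-1}{T-t}+2(n-1)\lambda$ near $T$; then $(\log|\nabla f|^2)'=2\mu\le\tfrac1{T-t}+4\lambda$ forces $|\nabla f|\lesssim(T-t)^{-1/2}$, so the flow line has finite length and converges in $M$ to a point where $R$ would be infinite — impossible. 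Hence $R\le2(n-1)\lambda$. For the lower bound, if $R(q_0)<0$ flow backward along $-\nabla f$; $R$ is strictly decreasing along the backward line, and it either (a) enters $\{R<-2(n-1)\lambda\}$, where the same blow‑up analysis gives $R\to-\infty$ in finite time along a finite‑length line; or (b) converges to a critical point $q$, where $0=\langle\nabla R,\nabla f\rangle(q)$ gives $|Ric|^2(q)=\mu(q)R(q)$, which is negative if $\mu(q)>0$, while if $\mu(q)\le0$ then $|Ric|^2(q)\ge R(q)^2/n$ forces $R(q)\ge\tfrac{2n(n-1)\lambda}{n-2}>0$; or (c) stays in $\{-2(n-1)\lambda<R<0\}$, where the displayed inequality gives $R'\le\tfrac1{n-1}R(R-2(n-1)\lambda)$, bounded away from $0$ while $R$ stays bounded, again forcing $R\to-\infty$ — all contradictions. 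This proves \eqref{positivity}, whence $\mu\in[\lambda,2\lambda]$ and $\Delta f\in[2\lambda,n\lambda]$.

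\emph{Step 3: the extremum and \eqref{gradestimate}.} Because $\mu\ge\lambda>0$, along any anti‑gradient curve $(|\nabla f|^2)'=-2\mu|\nabla f|^2\le-2\lambda|\nabla f|^2$, so $|\nabla f|^2$ decays exponentially, the curve has finite length, and it converges to a critical point with $f$‑value no larger than its starting value. A Cao–Zhou–type estimate along minimal geodesics — using $\nabla\nabla f(\dot\gamma,\dot\gamma)=\mu-Ric(\dot\gamma,\dot\gamma)\ge\lambda-Ric(\dot\gamma,\dot\gamma)$ together with the index‑form bound on $\int Ric(\dot\gamma,\dot\gamma)$ along a minimizing segment — then shows $f$ is proper, hence bounded below and attaining its minimum $f_0=f(p_0)$, and that $f$ has no critical point with $f>f_0$ (equivalently $|\nabla f|^2\ge2\lambda(f-f_0)>0$ off the minimum set). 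Given this, for any $q$ the anti‑gradient curve from $q$ terminates at a minimum point, so there is a gradient \emph{ascent} path from the minimum set to $q$; parametrising it by the value of $f$ and integrating $\tfrac{d}{df}|\nabla f|^2=2\mu$ from $f_0$ to $f(q)$, with $2\lambda\le2\mu\le4\lambda$, yields exactly \eqref{gradestimate}. Finally, $f$ is unbounded above: if $f\le f_0+C$ then \eqref{gradestimate} bounds $|\nabla f|$, but a bounded function with $\Delta f\ge2\lambda>0$ and bounded gradient cannot exist on a complete manifold (integrate $\Delta f$ over exhausting geodesic balls).

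\emph{Main obstacle.} Steps 1–2 are local and robust once the cancellation $(n-1)\rho=\tfrac12$ is exploited, and the flow‑line ODE arguments need no curvature hypothesis. The delicate point is Step 3: showing that $f$ is proper — hence bounded below and attains its minimum — and that its only critical points are minima. The finite‑length gradient‑flow lines give a clean reduction, but controlling them, and the minimizing geodesics in the Cao–Zhou estimate, ``at infinity'' is where the real work lies; an alternative is to first establish $\nabla\nabla f\ge0$ on all of $M$ by a maximum principle applied to the least eigenvalue of $\mu g-Ric$, which would immediately force the critical set of $f$ to coincide with its minimum set.
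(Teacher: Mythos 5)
Your Steps 1--2 are essentially a reparametrized version of the paper's own mechanism: your identity $\langle\nabla R,\nabla f\rangle=2|Ric|^2-2\mu R$ is (2.3), your Riccati inequality for $R$ along the gradient flow is the paper's inequality \eqref{MainIneq} written in the variable $R$ instead of $b=|\nabla f|^2$ (note $b'=2\mu$ along the flow of $\nabla f/|\nabla f|^2$), and your finite-time blow-up/finite-length contradiction is the same idea as Propositions \ref{PROP1} and \ref{pp}. Two caveats there. First, your appeal to real-analyticity to make $\{\nabla f\neq0\}$ dense is unjustified: $\rho=1/(2(n-1))$ is precisely the value at which the Ricci--Bourguignon operator degenerates, and no regularity theory is cited; the paper avoids this by noting that if the critical set had interior then $\Delta f=0$ there would force $R=\tfrac{2n(n-1)\lambda}{n-2}$, contradicting the bound $R\le 2(n-1)\lambda$ obtained from outside. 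This is fixable, and in fact your own pointwise identity at critical points ($2\mu R=2|Ric|^2$) already gives $R\ge0$ there, but not the upper bound of \eqref{positivity}. Second, the blow-up arguments tacitly assume the flow line survives until the ODE blow-up time; this needs the (routine but necessary) check that on a complete manifold the flow can only terminate if $|\nabla f|$, hence $R$, blows up first.

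The genuine gap is Step 3, which you yourself flag as ``where the real work lies'' but do not carry out. The lower bound in \eqref{gradestimate} is \emph{equivalent} to the statement that every critical value of $f$ equals $f_0$: your anti-gradient flow shows each point descends in finite length to \emph{some} critical point at level $c_q$, giving only $|\nabla f(q)|^2\ge 2\lambda(f(q)-c_q)$, which is weaker than claimed if $c_q>f_0$. A Cao--Zhou second-variation estimate yields properness of $f$, but properness does not exclude critical points above the minimum level (and note $\Delta f\ge2\lambda>0$ at a critical point does not make it a local minimum). The missing ingredient is exactly the paper's Proposition \ref{P1}: a putative critical point $p_0$ with a regular value $f_1<f(p_0)$ is joined to $f^{-1}(f_1)$ by a minimizing geodesic that coincides with a normalized gradient trajectory, along which Proposition \ref{C1} gives a uniform positive lower bound for $|\nabla f|^2$, contradicting $\nabla f(p_0)=0$. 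There is also a circularity risk in invoking Cao--Zhou before \eqref{gradestimate} is available, since that estimate needs an a priori bound on $|\dot f|$ in terms of $f-f_0$. Finally, your argument for unboundedness above is not valid as stated: integrating $\Delta f\ge2\lambda$ over exhausting balls gives $2\lambda\,vol(B_r)\le C\,area(\partial B_r)$, which is no contradiction without volume/area comparison (and Omori--Yau would require a Ricci lower bound you do not have). The paper instead proves forward completeness of the flow of $\nabla f/|\nabla f|^2$ (Proposition \ref{P2}, again via the lower bound of Proposition \ref{C1}), whence $f(\alpha(s))=s+\mathrm{const}\to+\infty$.
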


If the Schouten soliton $(M,g,f,\lambda)$ has constant scalar curvature, then the result above can be proven in the following way. First observe that $(M,g,f,\tilde{\lambda})$ is a Ricci soliton with $\tilde{\lambda}=R/(2(n-1))+\lambda$. To prove $(\ref{positivity})$ in this case, notice that from {\cite[Theorem 1]{ferlo} it follows that $R\in\{0,\tilde{\lambda},\ldots,(n-1)\tilde{\lambda},n\tilde{\lambda}\}$. On the other hand, {\cite[Proposition 3.3]{petersen} rules out the case $R=n\tilde{\lambda}$ for nonconstant $f$. Therefore $0\leq\tilde{\lambda} R\leq(n-1)\tilde{\lambda}^2$. The information on the critical points and $(\ref{gradestimate})$ follow from Hamilton's identity $(\ref{ham_identity})$ below, see {\cite[Lemma 6]{ferlo}.

In order to prove Theorem \ref{maintheorem} when the scalar curvature of $M$ is not necessarily constant we use an ordinary differential inequality satisfied by $|\nabla f|^2$ along suitable curves (see Proposition \ref{strategylemma}). Then we describe the behavior of the solutions of such inequality, obtaining various estimates (see Propositions \ref{C1}, \ref{C2}, \ref{PROP1} and \ref{pp}) which, together with the completeness of $M$, are used to prove Theorem \ref{maintheorem}. We expect the same approach to give a similar result when $\rho\neq1/2(n-1)$.

When $\lambda>0$, the lower bound in $(\ref{positivity})$ was first proved in \cite{catino1} using the corresponding Ricci-Bourguignon flow. They also implicitly showed an upper bound to $R$ for $n=3$. Namely, they proved that $R\leq8\lambda$ on a suitable subset of $M^3$. Our estimate, on the other hand, says that $R\leq4\lambda$, when $n=3$, on the whole manifold. We also point out that a similar version of $(\ref{gradestimate})$ has appeared in \cite{catino,catino1} under additional assumptions for several values of $\rho$, including shrinking Schouten solitons when $n=3$.

In the theory of gradient Ricci solitons, an useful tool is Hamilton's identity. It claims that on a gradient Ricci soliton there exists a constant $c_{0}$ so that
\begin{align}\label{ham_identity}
	R+|\nabla f|^2-2\lambda f=c_{0},
\end{align}
where $f$ is the potential function and $R$ is the scalar curvature. This identity was used in several ways to investigate both geometric and analytical features concerning Ricci solitons \cite{caozhou,pointofview,petersen}. We use Theorem \ref{maintheorem} in an analogous fashion to investigate Schouten solitons. The next results derive from this analogy. They deal with the growth of both the potential function and the volume of geodesic balls of a shrinking Schouten soliton, and are motivated by the results of \cite{caozhou} on Ricci solitons.

\begin{theorem}\label{maintheorem2}
	Let $(M^n,g,f,\lambda)$ be a complete noncompact shrinking Schouten soliton with $f$ nonconstant, $f_{0}=\displaystyle\min_{p\in M}f(p)$ and $q\in M$. Then
	\begin{align}\label{ineq}
		\frac{\lambda}{4}(d(p)-A_{1})^2+f_{0}\leq f(p)\leq\lambda(d(p)+A_{2})^2+f_{0},
	\end{align}
	where $A_{1}$ and $A_{2}$ are positive constants depending on $\lambda$ and the geometry of the soliton on the unity ball $B_{q}(1)$ and $d(p)=d(p,q)>2$.
\end{theorem}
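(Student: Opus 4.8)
The plan is to treat the two‑sided gradient estimate $(\ref{gradestimate})$ as the Schouten analogue of Hamilton's identity $(\ref{ham_identity})$ and then imitate \cite{caozhou}. Write $u:=f-f_{0}\geq0$. The upper bound in $(\ref{ineq})$ is the easy half: from the right inequality in $(\ref{gradestimate})$ one gets $\abs{\nabla\sqrt{u}}\leq\sqrt{\lambda}$ on $\{u>0\}$, and since $\sqrt{u}$ is continuous on $M$ and vanishes on the nonempty set $\{f=f_{0}\}$ (Theorem~\ref{maintheorem}), $\sqrt{u}$ is globally $\sqrt{\lambda}$‑Lipschitz; evaluating this between $q$ and $p$ gives $\sqrt{u(p)}\leq\sqrt{u(q)}+\sqrt{\lambda}\,d(p)$, which is $(\ref{ineq})$ with $A_{2}=\sqrt{u(q)/\lambda}$, and the left inequality in $(\ref{gradestimate})$ bounds $u(q)\leq\abs{\nabla f(q)}^{2}/(2\lambda)$, so $A_{2}$ depends only on $\lambda$ and the $1$‑jet of $f$ at $q$.

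For the lower bound I would integrate $(\ref{fundeq})$ along a unit‑speed minimizing geodesic $\gamma$ from $q$ to $p$, with $h=f\circ\gamma$ and $r_{0}=d(p)>2$. Tracing $(\ref{fundeq})$ gives $h''=\frac{R}{2(n-1)}+\lambda-Ric(\g,\g)\geq\lambda-Ric(\g,\g)$ by the left inequality in $(\ref{positivity})$, and integrating twice,
\[
f(p)=h(0)+r_{0}h'(0)+\int_{0}^{r_{0}}(r_{0}-t)h''(t)\,dt\ \geq\ f_{0}-r_{0}\abs{\nabla f(q)}+\tfrac{\lambda}{2}r_{0}^{2}-\int_{0}^{r_{0}}(r_{0}-t)\,Ric(\g,\g)\,dt .
\]
Everything then reduces to showing $\int_{0}^{r_{0}}(r_{0}-t)Ric(\g,\g)\,dt\leq C(1+r_{0})$ with $C$ depending only on $n$, $\lambda$, $\sup_{B_{q}(1)}\abs{Ric}$ and $u(q)$. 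I would split the integral over $[0,1]$, $[1,r_{0}-1]$ and $[r_{0}-1,r_{0}]$: on $[0,1]$ the geodesic lies in $B_{q}(1)$, so the integrand is $\leq r_{0}\sup_{B_{q}(1)}\abs{Ric}$; on $[r_{0}-1,r_{0}]$ I would substitute $Ric(\g,\g)=\frac{R}{2(n-1)}+\lambda-h''\leq2\lambda-h''$ (using the right inequality in $(\ref{positivity})$), integrate by parts twice, and control the boundary terms $h'(r_{0}-1)$ and $h(s)-h(r_{0}-1)$ by $\abs{\nabla f}\leq2\sqrt{\lambda}\sqrt{u}$ together with the upper estimate just proved; and on $[1,r_{0}-1]$, where minimality of $\gamma$ is used, I would feed into the second variation inequality $\int_{0}^{r_{0}}\phi^{2}Ric(\g,\g)\,dt\leq(n-1)\int_{0}^{r_{0}}(\phi')^{2}\,dt$ the cutoff $\phi=\sqrt{r_{0}-t}$ on $[1,r_{0}-1]$ (so that $\phi^{2}$ reproduces the weight $r_{0}-t$) extended affinely on the two end intervals, for which $\int_{0}^{r_{0}}(\phi')^{2}\,dt=r_{0}+\tfrac14\ln(r_{0}-1)$; subtracting the contributions of the two end intervals (handled as above) then bounds $\int_{1}^{r_{0}-1}(r_{0}-t)Ric(\g,\g)\,dt$. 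Collecting the pieces yields $f(p)-f_{0}\geq\tfrac{\lambda}{2}r_{0}^{2}-C_{1}r_{0}-C_{2}$, and the factor‑$4$ slack in the statement leaves ample room to absorb the linear and constant terms into $\tfrac{\lambda}{4}(r_{0}-A_{1})^{2}$ with $A_{1}$ of the advertised type.

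The main obstacle is precisely the estimate of $\int Ric(\g,\g)$ near the far endpoint $p$: there is no a priori curvature bound on $B_{p}(1)$, so one must convert $Ric(\g,\g)$ back into $\frac{R}{2(n-1)}+\lambda-h''$ and kill the Hessian term by integration by parts, paying with boundary data that is in turn controlled only by the upper estimate for $f$ that has just been established — this is why the two halves of the theorem must be proved in this order. It is also essential that $\phi$ be chosen with $\int(\phi')^{2}$ growing at most linearly in $r_{0}$: the naive piecewise‑linear ``tent'' cutoff would leave $\int\phi^{2}Ric(\g,\g)$ of size $r_{0}^{2}$, which cannot compete with the main term $\tfrac{\lambda}{2}r_{0}^{2}$. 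Finally, since Theorem~\ref{maintheorem} also supplies the lower bound $\abs{\nabla f}^{2}\geq2\lambda u$, there is a cleaner alternative route: following the unit downward gradient flow of $\sqrt{u}$, along which $\sqrt{u}$ decreases at rate $\geq\sqrt{\lambda/2}$, forces $d(p,\{f=f_{0}\})\leq\sqrt{2u(p)/\lambda}$, hence $u(p)\geq\tfrac{\lambda}{2}\,d(p,\{f=f_{0}\})^{2}$, from which $(\ref{ineq})$ follows once one locates $\{f=f_{0}\}$ relative to $q$ — a point again controlled by $\lambda$ and $\abs{\nabla f(q)}$ via the same flow together with a Bonnet--Myers bound on the extent of $\{f=f_{0}\}$ (on which $\nabla\nabla f$ degenerates and the ambient Ricci is $\geq\lambda g$).
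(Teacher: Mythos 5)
Your argument is correct, and while the upper bound coincides with the paper's (the Lipschitz bound $\bigl|\nabla\sqrt{f-f_{0}}\bigr|\leq\sqrt{\lambda}$ from the right-hand side of $(\ref{gradestimate})$), your lower bound takes a genuinely different route. The paper follows Cao--Zhou literally: it integrates the soliton equation \emph{once} along the minimizing geodesic, feeds the Ricci term into the second variation formula with the standard tent cutoff (equal to $1$ on $[1,l-1]$, so its Dirichlet energy is the constant $2(n-1)$), obtains the linear first-order bound $\max_{[l-1,l]}|\f|\geq\lambda l-C$, and only then converts this into a quadratic bound on $f$ by squaring the gradient estimate $|\nabla f|^2\leq4\lambda(f-f_{0})$ --- which is exactly where the constant $\lambda/4$ comes from. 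You instead integrate the Hessian \emph{twice}, so the quadratic main term $\tfrac{\lambda}{2}r_{0}^{2}$ appears directly and the gradient estimate is demoted to controlling boundary data; the price is the weight $r_{0}-t$ on the Ricci integral, which forces your $\sqrt{r_{0}-t}$ cutoff with energy $r_{0}+\tfrac14\ln(r_{0}-1)$ --- your observation that the naive tent cutoff would here produce an $r_{0}^{2}$ error, fatal against the main term, is the correct diagnosis of why this choice is essential. The treatment of the far interval $[r_{0}-1,r_{0}]$ (substitute $Ric(\g,\g)=\tfrac{R}{2(n-1)}+\lambda-h''$, use $(\ref{positivity})$, integrate by parts, and control the boundary terms via $|\nabla f|\leq2\sqrt{\lambda(f-f_{0})}$ together with the already-proved upper estimate) is the same device the paper uses, and your insistence on proving the upper bound first for exactly this reason matches the paper's logic. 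A dividend of your organization is that it yields $f(p)-f_{0}\geq\tfrac{\lambda}{2}r_{0}^{2}-Cr_{0}-C'$, i.e.\ the sharp leading constant $\lambda/2$ realized by the rigid example with $k=0$, rather than the paper's $\lambda/4$. One caveat: the ``cleaner alternative route'' you sketch at the end is not self-contained --- the soliton equation at the minimum set gives only $Ric\leq\bigl(\tfrac{R}{2(n-1)}+\lambda\bigr)g$ there, not a positive lower Ricci bound, so the Bonnet--Myers control of the extent of $\{f=f_{0}\}$ does not follow as stated; but this aside is not needed for your main argument.
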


To prove the theorem above we proceed as Cao and Zhou \cite{caozhou}. Namely, we use $(\ref{gradestimate})$ and the second variation of arc length to provide the lower and upper growth estimates $(\ref{ineq})$. It is worth mentioning that the rigid examples mentioned above show that $(\ref{ineq})$ is optimal. They also show that the lower growth estimate in $(\ref{ineq})$ is not true in general for expanding Schouten solitons.

Next we provide volume estimates for geodesic balls on complete Schouten solitons.

\begin{theorem}\label{maintheorem3}
	Let $(M^n,g,f,\lambda)$ be a complete noncompact shrinking Schouten soliton with $f$ nonconstant, $q\in M$, $\delta=\displaystyle\inf_{p\in M}R(p)$ and $\theta=\displaystyle\sup_{p\in M}R(p)$. Then there are positive constants $C_{1}$, $C_{2}$ and $r_{0}$, depending only on $n$ and $\lambda$, so that
	\begin{align}\label{volumestimate}
		C_{1}r^{\frac{n}{2}-\frac{(n-2)\theta}{4(n-1)\lambda}}\leq vol(B_{r}(q))\leq C_{2}r^{n-\frac{(n-2)\delta}{2(n-1)\lambda}},
	\end{align}
	for any $r>r_{0}$. In particular $Vol(M)=+\infty$.
\end{theorem}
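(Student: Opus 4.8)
The plan is to adapt the argument of Cao and Zhou \cite{caozhou}: first control the volume of the sublevel sets of $f$ by an ordinary differential inequality, and then pass to geodesic balls via Theorem \ref{maintheorem2}. Since $\lambda>0$, Theorem \ref{maintheorem} guarantees that $f$ attains its minimum $f_{0}$, so $u:=f-f_{0}\geq0$ is a well-defined nonnegative function. Tracing $(\ref{fundeq})$ gives
\begin{align*}
	\Delta u=\Delta f=n\lambda-\frac{(n-2)R}{2(n-1)},
\end{align*}
and $(\ref{positivity})$ forces $0\leq R\leq2(n-1)\lambda$, so $\delta$ and $\theta$ are finite and
\begin{align*}
	2\lambda\leq a:=n\lambda-\frac{(n-2)\theta}{2(n-1)}\leq\Delta u\leq n\lambda-\frac{(n-2)\delta}{2(n-1)}=:b\leq n\lambda .
\end{align*}
Moreover, on each level set $\{u=t\}$ the estimate $(\ref{gradestimate})$ reads $2\lambda t\leq|\nabla u|^2\leq4\lambda t$.

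For the core estimate, fix a regular value $t>0$ of $u$ — which holds for a.e.\ $t$ by Sard's theorem; note $D(t):=\{u<t\}$ is precompact since $(\ref{ineq})$ confines it to a geodesic ball — and set $V(t)=vol(D(t))$. Because $\partial D(t)=\{u=t\}$ is a smooth hypersurface with outward unit normal $\nabla u/|\nabla u|$, the divergence theorem gives $\int_{\{u=t\}}|\nabla u|\,dA=\int_{D(t)}\Delta u\,dV$, while the coarea formula yields $V(t)=\int_0^t\big(\int_{\{u=s\}}|\nabla u|^{-1}\,dA\big)\,ds$, so $V$ is locally absolutely continuous with $V'(t)=\int_{\{u=t\}}|\nabla u|^{-1}\,dA$ for a.e.\ $t$. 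Feeding in the bounds on $\Delta u$ and on $|\nabla u|^2$ along $\{u=t\}$, I obtain for a.e.\ $t>0$
\begin{align*}
	a\,V(t)\leq\int_{\{u=t\}}|\nabla u|\,dA\leq4\lambda t\,V'(t),\qquad 2\lambda t\,V'(t)\leq\int_{\{u=t\}}|\nabla u|\,dA\leq b\,V(t),
\end{align*}
hence $\dfrac{a}{4\lambda t}\leq\big(\log V\big)'(t)\leq\dfrac{b}{2\lambda t}$. Integrating from a fixed $t_{0}>0$ then gives
\begin{align*}
	V(t_0)\Big(\tfrac{t}{t_0}\Big)^{a/4\lambda}\leq V(t)\leq V(t_0)\Big(\tfrac{t}{t_0}\Big)^{b/2\lambda}\qquad(t\geq t_0).
\end{align*}

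To conclude, I transfer these bounds to balls. By Theorem \ref{maintheorem2}, for $r>r_0$ with $r_0$ chosen suitably large one has $D\big(\tfrac\lambda4(r-A_1)^2\big)\subseteq B_r(q)\subseteq\{u\leq\lambda(r+A_2)^2\}$, and the latter set has volume $V\big(\lambda(r+A_2)^2\big)$ by continuity of $V$; substituting $t=\tfrac\lambda4(r-A_1)^2$ and $t=\lambda(r+A_2)^2$ into the displayed bounds and using $\big(\lambda(r+A_2)^2\big)^{b/2\lambda}\asymp r^{b/\lambda}$ and $\big(\tfrac\lambda4(r-A_1)^2\big)^{a/4\lambda}\asymp r^{a/2\lambda}$ yields
\begin{align*}
	C_1\,r^{\frac n2-\frac{(n-2)\theta}{4(n-1)\lambda}}\leq vol(B_r(q))\leq C_2\,r^{\,n-\frac{(n-2)\delta}{2(n-1)\lambda}},
\end{align*}
since $a/2\lambda=\tfrac n2-\tfrac{(n-2)\theta}{4(n-1)\lambda}$ and $b/\lambda=n-\tfrac{(n-2)\delta}{2(n-1)\lambda}$. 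Because $\theta\leq2(n-1)\lambda$, the left exponent is at least $1$, so $vol(B_r(q))\to+\infty$ and $Vol(M)=+\infty$.

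The main technical obstacle is the rigorous justification of the differential inequalities for $V$ — handling the non-regular values of $u$ and differentiating the merely absolutely continuous function $V$ — though this is by now routine. A secondary point, if one insists that $C_1$, $C_2$ and $r_0$ depend only on $n$ and $\lambda$, is that extra care is needed when normalizing the integration constants, for instance by exploiting the monotonicity of the ratios $t\mapsto V(t)/t^{a/4\lambda}$ and $t\mapsto V(t)/t^{b/2\lambda}$ and evaluating them at a controlled base scale.
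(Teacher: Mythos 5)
Your proof is correct and follows essentially the same route as the paper: integrate the traced soliton equation $(\ref{trace})$ over sublevel sets of the potential, combine the divergence theorem with the gradient estimate $(\ref{gradestimate})$ and the coarea formula to get a differential inequality for the volume function, and then transfer to geodesic balls via Theorem \ref{maintheorem2}. The only cosmetic difference is that you parametrize the sublevel sets by the value $t$ of $f-f_{0}$ rather than by $\rho=2\sqrt{f-f_{0}}$ as the paper does; after the substitution $t=\rho^{2}/4$ the two computations, and the resulting exponents, coincide.
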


By Theorem \ref{maintheorem} we have $\delta,\ \theta\in[0,2(n-1)\lambda]$. Therefore, $(\ref{volumestimate})$ implies that the volume growth of geodesic balls in a complete noncompact shrinking Schouten soliton is at least linear and at most Euclidean. Also, both estimates are optimal, as one can verify the equality for the rigid Schouten solitons described earlier.

This paper is organized as follows. In Section \ref{preliminariesresults}, after recalling some basic facts, we provide the main tools for the development of the paper, which are Proposition \ref{strategylemma} and Lemma \ref{lemmakey}. The former provides our claimed differential inequality for $|\nabla f|^2$, while the latter shows that a certain geometric quantity is monotone along the integral curves of $\nabla f$, which is fundamental in obtaining certain estimates for $|\nabla f|^2$. In Section \ref{gradestimates} these estimates are used to prove Theorem \ref{maintheorem}. In Section \ref{growthvolball}, using Theorem \ref{maintheorem} combined with techniques borrowed from the theory of Ricci solitons \cite{caozhou}, we prove Theorem \ref{maintheorem2} and Theorem \ref{maintheorem3}.

\section{Preliminary Results}\label{preliminariesresults}

We start this section with some identities for Schouten solitons proved in \cite{catino}.

\begin{proposition}[\cite{catino}]If $(M^n,g,f,\lambda)$ is a gradient Schouten soliton, then
	%\begin{align}\label{wedgezero}
	%	df\wedge dR=0,
	%\end{align}
	\begin{align}\label{trace}
		\Delta f=n\lambda-\frac{n-2}{2(n-1)}R,
	\end{align}
	\begin{align}\label{Riccizero}
		Ric(\nabla f,X)=0,\ \forall X\in\mathfrak{X}(M),
	\end{align}
	\begin{align}\label{IdentitySch}
		\left\langle\nabla f,\nabla R\right\rangle+\left(\frac{R}{n-1}+2\lambda\right)R=2|Ric|^2.
	\end{align}
\end{proposition}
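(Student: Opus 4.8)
The plan is to derive all three identities directly from the soliton equation (\ref{fundeq}) by tracing it and taking divergences, following the scheme familiar from gradient Ricci solitons while carrying along the function $h:=\frac{R}{2(n-1)}+\lambda$ that sits on the right-hand side of (\ref{fundeq}).

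For (\ref{trace}) I would simply trace (\ref{fundeq}): since $\operatorname{tr}Ric=R$, $\operatorname{tr}\nabla\nabla f=\Delta f$ and $\operatorname{tr}g=n$, tracing gives $R+\Delta f=\frac{nR}{2(n-1)}+n\lambda$, and solving for $\Delta f$ together with $\frac{n}{2(n-1)}-1=-\frac{n-2}{2(n-1)}$ yields (\ref{trace}). For (\ref{Riccizero}) I would take the divergence of (\ref{fundeq}). Using the contracted second Bianchi identity $\operatorname{div}Ric=\frac{1}{2}\nabla R$, the commutation formula $\operatorname{div}(\nabla\nabla f)=\nabla(\Delta f)+Ric(\nabla f,\cdot)$, and $\operatorname{div}(hg)=\nabla h=\frac{1}{2(n-1)}\nabla R$, the divergence of (\ref{fundeq}) becomes $\frac{1}{2}\nabla R+\nabla(\Delta f)+Ric(\nabla f,\cdot)=\frac{1}{2(n-1)}\nabla R$. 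Substituting $\nabla(\Delta f)=-\frac{n-2}{2(n-1)}\nabla R$ from (\ref{trace}) and solving for $Ric(\nabla f,\cdot)$, the coefficient of $\nabla R$ that remains is $\frac{1}{2(n-1)}-\frac{1}{2}+\frac{n-2}{2(n-1)}=\frac{n-1}{2(n-1)}-\frac{1}{2}=0$, so $Ric(\nabla f,\cdot)=0$, which is (\ref{Riccizero}).

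For (\ref{IdentitySch}) I would take the divergence of the identity $Ric(\nabla f,\cdot)=0$ just obtained. Expanding $\operatorname{div}\big(Ric(\nabla f,\cdot)\big)=(\operatorname{div}Ric)(\nabla f)+\langle Ric,\nabla\nabla f\rangle$ (the last term being the full contraction of $Ric$ with the Hessian), using $\operatorname{div}Ric=\frac{1}{2}\nabla R$ once more, and inserting $\nabla\nabla f=hg-Ric$ from (\ref{fundeq}) so that $\langle Ric,\nabla\nabla f\rangle=hR-|Ric|^2$, I obtain $0=\frac{1}{2}\langle\nabla R,\nabla f\rangle+hR-|Ric|^2$. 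Multiplying by $2$ and recalling that $2h=\frac{R}{n-1}+2\lambda$ gives exactly (\ref{IdentitySch}).

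Since each computation is short, the only real obstacle is bookkeeping: one has to fix the sign conventions in the Ricci/commutation identity and in the contracted Bianchi identity consistently, and then track the coefficients of $\nabla R$ through the divergence step — which is precisely where the distinguished value $\rho=1/2(n-1)$ forces the $\nabla R$ terms to cancel and produces $Ric(\nabla f,\cdot)=0$, the same value singled out in the introduction.
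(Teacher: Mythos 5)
Your derivation is correct. The paper itself gives no proof of this proposition — it simply cites \cite{catino} — and your computation (trace the soliton equation; take its divergence using $\operatorname{div}Ric=\frac{1}{2}\nabla R$ and $\operatorname{div}(\nabla\nabla f)=\nabla(\Delta f)+Ric(\nabla f,\cdot)$, observing that the coefficient of $\nabla R$ vanishes exactly at $\rho=1/2(n-1)$; then take the divergence of $Ric(\nabla f,\cdot)=0$ and substitute $\nabla\nabla f=hg-Ric$) is precisely the standard argument from that reference, with all coefficients checking out.
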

%An immediate consequence of the proposition above is the following corollary.
%\begin{corollary}
%	Let $(M^n,g,f,\lambda)$ be a gradient Schouten soliton. Then
%	\begin{align}\label{int1}
%	d(|\nabla f|^2)=\left(\frac{R}{n-1}+2\lambda\right)df.
%	\end{align}
%\end{corollary}
%\begin{proof}
%	Let $X\in\mathfrak{X}(M)$. By equation $(\ref{fundeq})$ we have 
%	\begin{align}
%	Ric(X,\nabla f)+\nabla\nabla f(X,\nabla f)=\left(\frac{R}{2(n-1)}+\lambda\right)g(X,\nabla f).
%	\end{align}
%	Using $(\ref{IdentitySch})$ we have,
%	\begin{align*}
%	d(|\nabla f|^2)(X)=2\nabla\nabla f(X,\nabla f)=\left(\frac{R}{2(n-1)}+\lambda\right)df(X),
%	\end{align*}
%	which implies $(\ref{int1})$.
%\end{proof}

 Now let $p\in M$ be a regular point of $f$ and $\alpha_{p}:(\omega_{1}(p),\omega_{2}(p))\rightarrow M$ the maximal integral curve of $\frac{\nabla f}{|\nabla f|^2}$ through $p$. When the dependence on $p$ is irrelevant to the context we write $\alpha:(\omega_{1},\omega_{2})\rightarrow M$. In what follows we recall some basic facts about $\alpha$ which are going to be used later:
\begin{enumerate}
	\item Since $(f\circ\alpha)'(s)=1,\ \forall s\in(\omega_{1},\omega_{2})$, for any $[s_{1},s_{2}]\subset(\omega_{1},\omega_{2})$ we have
	\begin{align}\label{lindepen}
		(f\circ\alpha)(s_{2})-(f\circ\alpha)(s_{1})=s_{2}-s_{1},
	\end{align}
	that is, $f\circ\alpha$ is an affine function of $s$.
	\item Fix $s_{0}\in(\omega_{1},\omega_{2})$ and consider the diffeomorphism
	\begin{align}\label{changparam}
		t(s)=\int_{s_{0}}^{s}\frac{d\xi}{|\nabla f(\alpha(\xi))|}, s\in(\omega_{1},\omega_{2}).
	\end{align}
	Observe that for each $s>s_{0}$, $t(s)$ is the length of $\alpha$ restricted to $[s_{0},s]$. Consequently, for any $[s_{1},s_{2}]\subset(\omega_{1},\omega_{2})$ we have
	\begin{align}\label{dist}
		d(\alpha(s_{1}),\alpha(s_{2}))\leq\int_{s_{1}}^{s_{2}}\frac{d\xi}{|\nabla f(\alpha_{p}(\xi))|}.
	\end{align}
	Denote by $s(t)$ the inverse of $t(s)$. Then the curve
	\begin{align}\label{geodintcurv}
		(\alpha\circ s)(t)=\alpha(s(t))
	\end{align}
	is an integral curve of $\frac{\nabla f}{|\nabla f|}$.	Furthermore, a standard computation shows that $(\alpha\circ s)(t)$ is a geodesic for each $t$.
\end{enumerate}

The next result shows that $|\nabla f|^2$ satisfies a certain ordinary differential inequality along $\alpha(s)$. As we will see in the subsequent results, it allows to obtain estimates for $|\nabla f(\alpha(s))|^2$.

\begin{proposition}\label{strategylemma}
	Let $(M^n,g,f,\lambda)$, $\lambda\neq0$, be a Schouten soliton with $f$ nonconstant and $\alpha(s)$, $s\in(\omega_{1},\omega_{2})$, a maximal integral curve of $\frac{\nabla f}{|\nabla f|^2}$. The function $b:(\omega_{1},\omega_{2})\rightarrow\mathbb{R}$, defined by
	\begin{align}\label{composition}
		b(s)=|\nabla f(\alpha(s))|^2,
	\end{align}
	satisfies the differential inequality
	\begin{equation}\label{MainIneq}
		bb''-(b')^2+6\lambda b'-8\lambda^2\geq0,
	\end{equation}
	where $b'$ and $b''$ are the first and the second derivative of $b$ with respect to $s$.
\end{proposition}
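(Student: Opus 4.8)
The plan is to compute $b'$ and $b''$ directly in terms of geometric quantities evaluated along $\alpha$, and then feed these into the soliton identities (\ref{trace}), (\ref{Riccizero}), (\ref{IdentitySch}) to produce (\ref{MainIneq}). Write $X=\nabla f/|\nabla f|^2$ for the velocity of $\alpha$, so that $b(s)=|\nabla f|^2\circ\alpha$ and $b'(s)=X(|\nabla f|^2)=\frac{1}{|\nabla f|^2}\langle\nabla|\nabla f|^2,\nabla f\rangle=\frac{2}{|\nabla f|^2}\nabla\nabla f(\nabla f,\nabla f)$. Using the fundamental equation (\ref{fundeq}) together with (\ref{Riccizero}), the Hessian term simplifies: $\nabla\nabla f(\nabla f,\nabla f)=\bigl(\tfrac{R}{2(n-1)}+\lambda\bigr)|\nabla f|^2-Ric(\nabla f,\nabla f)=\bigl(\tfrac{R}{2(n-1)}+\lambda\bigr)|\nabla f|^2$. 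Hence $b'=\frac{R}{n-1}+2\lambda$, i.e. $b'$ depends only on the scalar curvature along $\alpha$. This is the crucial simplification: differentiating once more, $b''=X\bigl(\tfrac{R}{n-1}+2\lambda\bigr)=\tfrac{1}{n-1}X(R)=\tfrac{1}{(n-1)|\nabla f|^2}\langle\nabla R,\nabla f\rangle$, and now (\ref{IdentitySch}) gives $\langle\nabla f,\nabla R\rangle=2|Ric|^2-\bigl(\tfrac{R}{n-1}+2\lambda\bigr)R$. Therefore
\begin{align*}
b\,b''=\frac{1}{n-1}\Bigl(2|Ric|^2-\Bigl(\tfrac{R}{n-1}+2\lambda\Bigr)R\Bigr)=\frac{2|Ric|^2}{n-1}-b'\cdot\frac{R}{n-1}=\frac{2|Ric|^2}{n-1}-b'\Bigl(b'-2\lambda\Bigr).
\end{align*}

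Next I would collect terms: $b\,b''-(b')^2+6\lambda b'-8\lambda^2=\tfrac{2|Ric|^2}{n-1}-2(b')^2+8\lambda b'-8\lambda^2=\tfrac{2|Ric|^2}{n-1}-2(b'-2\lambda)^2$. Since $b'-2\lambda=\tfrac{R}{n-1}$, the right-hand side equals $\tfrac{2}{n-1}\bigl(|Ric|^2-\tfrac{R^2}{n-1}\bigr)$. So (\ref{MainIneq}) is equivalent to the pointwise curvature inequality $|Ric|^2\geq\tfrac{R^2}{n-1}$ along $\alpha$. This is exactly where the structure of Schouten solitons enters: by (\ref{Riccizero}), $\nabla f$ is a null eigenvector of $Ric$, so at a regular point of $f$ the Ricci tensor is supported on the $(n-1)$-dimensional orthogonal complement of $\nabla f$. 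A symmetric bilinear form on an $(n-1)$-dimensional space with trace $R$ (note $Ric(\nabla f,\nabla f)=0$ forces the full trace $R$ to be carried by that complement) satisfies, by Cauchy--Schwarz applied to its eigenvalues, $|Ric|^2\geq R^2/(n-1)$. This finishes the proof at regular points, which is all that is needed since $\alpha$ lies in the regular set of $f$ by construction.

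The main obstacle is organizational rather than conceptual: one must be careful that all the manipulations are justified along $\alpha$, i.e. that $|\nabla f|\neq0$ there (true by definition of an integral curve of $\nabla f/|\nabla f|^2$ through a regular point), and that the chain-rule computations of $b'$ and $b''$ correctly track the reparametrization — in particular the factor $1/|\nabla f|^2$ coming from $\dot\alpha=\nabla f/|\nabla f|^2$ must be inserted consistently. A secondary point to verify is the sign in the trace accounting: because (\ref{Riccizero}) makes $\nabla f$ lie in the kernel of $Ric$, the scalar curvature $R=\operatorname{tr}Ric$ is the sum of the $n-1$ eigenvalues of $Ric$ restricted to $(\nabla f)^\perp$, so Cauchy--Schwarz gives $R^2\leq(n-1)|Ric|^2$ with no further hypotheses. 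Once these checks are in place, the inequality (\ref{MainIneq}) drops out as an identity plus a nonnegative term, and in fact the argument shows $b\,b''-(b')^2+6\lambda b'-8\lambda^2=\tfrac{2}{n-1}\bigl(|Ric|^2-\tfrac{R^2}{n-1}\bigr)\geq0$ with equality precisely when $Ric$ is a multiple of the identity on $(\nabla f)^\perp$.
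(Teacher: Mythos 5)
Your proposal is correct and follows essentially the same route as the paper: compute $b'=\tfrac{R}{n-1}+2\lambda$ from the soliton equation together with $Ric(\nabla f,\cdot)=0$, differentiate again and substitute the identity $\left\langle\nabla f,\nabla R\right\rangle+\left(\tfrac{R}{n-1}+2\lambda\right)R=2|Ric|^2$, and conclude via $(n-1)|Ric|^2\geq R^2$, which holds because $\nabla f$ lies in the kernel of $Ric$. Your observation that the left-hand side of the inequality equals $\tfrac{2}{n-1}\bigl(|Ric|^2-\tfrac{R^2}{n-1}\bigr)$ exactly is a slightly sharper packaging of the same computation.
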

\begin{proof}
	Consider the smooth function $a:(\omega_{1},\omega_{2})\rightarrow\mathbb{R}$ given by $a(s)=R(\alpha(s))$. From $d(|\nabla f|^2)(X)=2\nabla\nabla f(X,\nabla f)$ and equation $(\ref{fundeq})$ one has
	\begin{align}\label{eq}
		\begin{split}
			b'(s)&=d(|\nabla f|^2)(\alpha'(s))\\
			&=\left(\frac{R(\alpha(s))}{n-1}+2\lambda\right)df(\alpha'(s))\\
			&=\frac{a(s)}{n-1}+2\lambda,
		\end{split}
	\end{align}
	which after differentiating gives $a'(s)=(n-1)b''(s)$. Consequently,
	\begin{align}\label{int3}
		\begin{split}
			\left\langle\nabla f(\alpha(s)),\nabla R(\alpha(s))\right\rangle&=|\nabla f(\alpha(s))|^2 dR(\alpha'(s))\\
			&=b(s)a'(s)\\
			&=(n-1)b(s)b''(s).
		\end{split}
	\end{align}
	Putting $(\ref{IdentitySch})$, $(\ref{eq})$ and $(\ref{int3})$ together we have
	\begin{align*}
		(n-1)(b(s)b''(s)+b'(s)(b'(s)-2\lambda))&=2|Ric|^2(\alpha(s))\\
		&\geq2(n-1)(b'(s)-2\lambda)^2,
	\end{align*}
	where in the second line we have used the inequality $(n-1)|Ric|^2\geq R^2$, which is a consequence of $(\ref{Riccizero})$, and $(\ref{eq})$ once again. Consequently,
	\begin{equation}\label{theend}
		bb''\geq(b'-2\lambda)(b'-4\lambda),
	\end{equation}
	which finishes the proof.
\end{proof}

If $p\in M$ is a given regular point of $f$, the {\it solution of $(\ref{MainIneq})$ associated to $p$} is the one obtained as in $(\ref{composition})$ using the curve $\alpha_{p}:(\omega_{1}(p),\omega_{2}(p))\rightarrow M$ through $p$.

\begin{remark}\label{simply}
	If $b(s)$, $s\in(s_{1},s_{2})$, is a solution of $(\ref{MainIneq})$ with $\lambda<0$, then $\phi(z)=b(-z)$, $z\in(-s_{2},-s_{1})$, satisfies
	\begin{align*}
		\phi\phi_{zz}-\left(\phi_{z}\right)^2+6\mu\phi_{z}-8\mu^2\geq0,
	\end{align*}
	with $\mu=-\lambda>0$. Here $\phi_{z}$ is the derivative of $\phi$ with respect to $r$.
\end{remark}

\begin{remark}\label{examplater}
	Let $(\mathbb{R}^{n-k}\times_{\Gamma} N^k,g,f,\lambda)$ be the Schouten soliton described in the introduction. It has constant scalar curvature $R=\frac{2(n-1)k\lambda}{2(n-1)-k}$ and, if $k\leq n-1$, its potential function $f$ is not constant and $|\nabla f|^2=\left(\frac{R}{n-1}+2\lambda\right)f$. After a linear change of coordinates using $(\ref{lindepen})$ with the condition $f(\alpha(0))=0$ we can replace $f$ by $s$ obtaining a function $b(s)=\left(\frac{R}{n-1}+2\lambda\right)s$. A simple computation shows that this $b(s)$ is a solution of $(\ref{MainIneq})$.
\end{remark}

%Although we are interested in solutions of inequality $(\ref{MainIneq})$ which are related to Schouten solitons in the way described above, in what follows we investigate a general positive solution of such inequality. In the next section we apply the results here obtained to the solutions associated to Schouten solitons, constructed in Proposition \ref{strategylemma}.

Now we investigate a general positive solution of inequality $(\ref{MainIneq})$. In the next section we apply the results obtained here to the solutions associated to Schouten solitons, that is, those constructed in Proposition \ref{strategylemma}. For this purpose, denote by $(\omega_{1},\omega_{2})\subset\mathbb{R}$ any fixed interval, possibly unbounded, and let $b:(\omega_{1},\omega_{2})\rightarrow\mathbb{R}$ be a positive smooth solution of $(\ref{MainIneq})$, with $\lambda\neq0$. Note that the latter condition prevents constant functions as solutions of such inequality. Also, in view of Remark \ref{simply}, we may assume that $\lambda>0$ in all proofs.

We start with the following simple result about positive solutions of $(\ref{MainIneq})$.

\begin{lemma}\label{impRemark2}
	If $b(s),\ s\in(\omega_{1},\omega_{2}),$ is a positive solution of $(\ref{MainIneq})$, then:
	\begin{enumerate}
		\item\label{item1} $b(s)$ has at most one critical point, which must be a minimum. In particular, $b'$ changes sign in $(\omega_{1},\omega_{2})$ at most once;
		\item\label{item2} If there is $s_{1}\in(\omega_{1},\omega_{2})$ so that $b'(s_{1})>\max\{2\lambda,4\lambda\}$ $($$b'(s_{1})<\min\{2\lambda,4\lambda\}$, respectively$)$, then $b'(s)\geq b'(s_{1})$, $\forall s\in[s_{1},\omega_{2})$ $($$b'(s)\leq b'(s_{1})$, $\forall s\in(\omega_{1},s_{1}]$, respectively$)$.
	\end{enumerate}
\end{lemma}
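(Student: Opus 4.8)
The plan is to work with the equivalent form $(\ref{theend})$ of the inequality, namely $bb''\geq(b'-2\lambda)(b'-4\lambda)$, and to assume $\lambda>0$ as permitted by Remark \ref{simply}, so that $\max\{2\lambda,4\lambda\}=4\lambda$ and $\min\{2\lambda,4\lambda\}=2\lambda$. The single observation driving both items is this: since $b>0$, at any $s$ with $b'(s)<2\lambda$ or $b'(s)>4\lambda$ the right-hand side of $(\ref{theend})$ is strictly positive, hence $b''(s)>0$. Equivalently, $b'$ is \emph{strictly increasing} on every interval on which its values stay outside the closed band $[2\lambda,4\lambda]$.

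For item (\ref{item1}), let $s_{0}$ be a critical point of $b$. Then $b'(s_{0})=0<2\lambda$, so the observation gives $b''(s_{0})>0$, and hence every critical point of $b$ is a strict local minimum. If $b$ had two critical points $s_{0}<s_{1}$, then $b$ restricted to $[s_{0},s_{1}]$ would attain its maximum at some interior point (the endpoints, being strict local minima, cannot be maxima), producing a critical point that is not a local minimum --- a contradiction. So $b$ has at most one critical point, which is a minimum. Consequently $b'$ has at most one zero; since $b'$ is continuous and, near such a zero $s_{0}$, negative on the left and positive on the right (because $b''(s_{0})>0$), it is negative throughout $(\omega_{1},s_{0})$ and positive throughout $(s_{0},\omega_{2})$. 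Thus $b'$ changes sign at most once.

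For item (\ref{item2}), suppose first $b'(s_{1})>4\lambda$. I would first show that $b'>4\lambda$ on all of $[s_{1},\omega_{2})$. If not, the set $\{s\in[s_{1},\omega_{2}):b'(s)\leq4\lambda\}$ is nonempty; its infimum $\bar{s}$ satisfies $\bar{s}>s_{1}$ by continuity, $b'(\bar{s})=4\lambda$, and $b'>4\lambda$ on $[s_{1},\bar{s})$. By the observation, $b'$ is strictly increasing on $[s_{1},\bar{s})$, which forces $b'(\bar{s})\geq b'(s_{1})>4\lambda$, contradicting $b'(\bar{s})=4\lambda$. Hence $b'>4\lambda$ on $[s_{1},\omega_{2})$; applying the observation once more, $b''>0$ there, so $b'$ is increasing on $[s_{1},\omega_{2})$ and $b'(s)\geq b'(s_{1})$ for all such $s$. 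The case $b'(s_{1})<2\lambda$ is handled symmetrically: one shows $b'<2\lambda$ on $(\omega_{1},s_{1}]$ by the analogous supremum argument, so $b''>0$ and $b'$ is increasing there, whence $b'(s)\leq b'(s_{1})$ for all $s\in(\omega_{1},s_{1}]$.

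I do not expect a genuine obstacle here. The only points demanding care are the strictness $b''>0$ (which uses $b>0$, not just $b\geq0$) and the continuity bookkeeping in the infimum/supremum step of item (\ref{item2}); note also that the reflection of Remark \ref{simply} sends $\lambda$ to $-\lambda$, so it cannot be reused to deduce the ``below'' case from the ``above'' case while keeping $\lambda>0$, and the symmetric argument must be carried out directly.
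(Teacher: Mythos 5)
Your proof is correct and follows essentially the same route as the paper: both arguments rest on the observation that $(\ref{theend})$ with $b>0$ forces $b''>0$ wherever $b'$ lies outside the band between $2\lambda$ and $4\lambda$ (in particular at any critical point of $b$), yielding the two-local-minima contradiction for item $(\ref{item1})$ and the continuation argument for item $(\ref{item2})$. Your infimum/supremum bookkeeping simply makes explicit the open-closed step that the paper compresses into the assertion that $(\ref{seekingfor})$ propagates to all of $[s_{1},\omega_{2})$.
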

\begin{proof}
	To prove $(\ref{item1})$ assume by contradiction that $s_{1},s_{2}\in(\omega_{1},\omega_{2})$ are two distinct critical points of $b$ with $s_{1}<s_{2}$. Let $i\in\{1,2\}$. Using equation $(\ref{MainIneq})$ we obtain $b(s_{i})b''(s_{i})\geq8\lambda^2>0$, and then $s_{i}$ is a local minimum of $b$, due to $b(s_{i})>0$. Then there exists a local maximum $s_{3}\in(s_{1},s_{2})$, that is $b''(s_{3})\leq0$ and $b'(s_{3})=0$. As before, the latter equality implies $b''(s_{3})>0$, what is a contradiction.
	
	Now we will prove $(\ref{item2})$. If $b'(s_{1})>\max\{2\lambda,4\lambda\}$ then equation $(\ref{MainIneq})$ implies that
	\begin{align}\label{ineqimp}
		b(s_{1})b''(s_{1})\geq(b'(s_{1})-2\lambda)(b'(s_{1})-4\lambda)>0.
	\end{align}
	Since $b(s_{1})>0$, we conclude that $b'$ is increasing around $s_{1}$, which implies that 
	\begin{align}\label{seekingfor}
		b'(s)\geq b'(s_{1})>\max\{2\lambda,4\lambda\},
	\end{align}
	for some $s\geq s_{1}$ close to $s_{1}$. Consequently $(\ref{ineqimp})$ is true for such $s$, which implies that $(\ref{seekingfor})$ holds for $s\in[s_{1},\omega_{2})$. The proof is analogous in the case where $b'(s_{1})<\min\{2\lambda,4\lambda\}$.
\end{proof}

Our first estimate is the following one.

\begin{proposition}\label{C1}
	Let $b:(\omega_{1},\omega_{2})\rightarrow\mathbb{R}$ be a positive smooth solution of $(\ref{MainIneq})$ and $\tilde{s}\in(\omega_{1},\omega_{2})$. If $	c=\min\left\{b(\tilde{s}),b(\tilde{s})e^{\frac{b'(\tilde{s})}{6\lambda}}\right\}>0,$
	%	\begin{align*}
	%	c=\min\left\{b(s_{1}),b(s_{1})e^{\frac{b'(s_{1})}{6|\lambda|}}\right\}>0,
	%	\end{align*}
	then
	\begin{enumerate}
		\item $b(s)\geq c,\ \forall s\in[\tilde{s},\omega_{2})$, if $\lambda>0$;
		\item $b(s)\geq c,\ \forall s\in(\omega_{1},\tilde{s}]$, if $\lambda<0$.
	\end{enumerate}
\end{proposition}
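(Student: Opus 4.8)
\emph{Proof plan.} The plan is to reduce to the case $\lambda>0$ and then rewrite $(\ref{MainIneq})$ as a first-order linear differential inequality for $b$, which can be integrated explicitly. First, by Remark \ref{simply} it is enough to prove (1): given a solution with $\lambda<0$, the function $\phi(z)=b(-z)$ solves the analogous inequality with $\mu=-\lambda>0$, and applying (1) to $\phi$ with base point $-\tilde s$ yields (2), since $\phi(-\tilde s)=b(\tilde s)$ and $\phi_z(-\tilde s)/(6\mu)=b'(\tilde s)/(6\lambda)$. So assume $\lambda>0$. Since $\lambda\neq0$, $(\ref{MainIneq})$ gives $bb''-(b')^2+6\lambda b'\geq8\lambda^2>0$, and dividing by $b^2>0$ this reads
\[
(\log b)''-6\lambda\left(\frac{1}{b}\right)'=\frac{bb''-(b')^2+6\lambda b'}{b^2}\geq0,
\]
i.e. $(\log b)'-6\lambda/b$ is nondecreasing on $(\omega_1,\omega_2)$. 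Integrating from $\tilde s$ to $s$ and multiplying by $b(s)>0$, I get
\[
b'(s)\geq Kb(s)+6\lambda\quad\text{for }s\in[\tilde s,\omega_2),\qquad K:=\frac{b'(\tilde s)-6\lambda}{b(\tilde s)}.
\]

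Next I would split on the sign of $K$. If $K\geq0$, then $b'\geq6\lambda>0$ on $[\tilde s,\omega_2)$, so $b$ is increasing there and $b(s)\geq b(\tilde s)$; moreover $K\geq0$ forces $b'(\tilde s)\geq6\lambda>0$, so $e^{b'(\tilde s)/6\lambda}>1$ and hence $c=b(\tilde s)$, which finishes this case. If $K<0$, I multiply $b'-Kb\geq6\lambda$ by the integrating factor $e^{-Ks}>0$ and integrate from $\tilde s$ to $s$, obtaining the explicit bound
\[
b(s)\geq y(s):=\frac{6\lambda}{\abs{K}}+\left(b(\tilde s)-\frac{6\lambda}{\abs{K}}\right)e^{K(s-\tilde s)},\qquad s\in[\tilde s,\omega_2).
\]
Since $K<0$, the function $y$ is monotone with $y(\tilde s)=b(\tilde s)$ and $y(s)\to6\lambda/\abs{K}$ as $s\to+\infty$, so $y(s)\geq\min\{b(\tilde s),\,6\lambda/\abs{K}\}$ on $[\tilde s,\omega_2)$. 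Finally, $K<0$ means $b'(\tilde s)<6\lambda$, so $6\lambda/\abs{K}=6\lambda\,b(\tilde s)/(6\lambda-b'(\tilde s))$, and the elementary inequality $e^{-u}\geq1-u$ (with $u=b'(\tilde s)/6\lambda<1$) gives $e^{b'(\tilde s)/6\lambda}\leq6\lambda/(6\lambda-b'(\tilde s))$, whence $6\lambda/\abs{K}\geq b(\tilde s)e^{b'(\tilde s)/6\lambda}$. Therefore $b(s)\geq\min\{b(\tilde s),\,b(\tilde s)e^{b'(\tilde s)/6\lambda}\}=c$, as claimed.

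I expect the main obstacle to be the first step: recognizing that, after discarding the favorable term $-8\lambda^2$ and dividing by $b^2$, inequality $(\ref{MainIneq})$ says exactly that $(\log b)'-6\lambda/b$ is monotone, which linearizes the problem. Everything after that is a routine integrating-factor estimate plus the calculus inequality $e^{-u}\geq1-u$; the only mildly delicate point is verifying that the awkward-looking constant $c$ in the statement is dominated by the sharper bound $\min\{b(\tilde s),\,6\lambda/\abs{K}\}$ that the computation actually produces, which is again just $e^{-u}\geq1-u$. One could instead invoke Lemma \ref{impRemark2}(\ref{item1}) to reduce immediately to the genuinely non-trivial configuration $b'(\tilde s)<0$ with $b$ attaining an interior minimum, but this does not really shorten the argument. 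As a sanity check, the linear solution $b$ of Remark \ref{examplater} has $c=b(\tilde s)$, with equality at $s=\tilde s$, so the estimate is sharp.
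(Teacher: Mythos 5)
Your argument is correct, and it takes a genuinely different route from the paper's. The paper works with the monotone quantity $b'+6\lambda\ln b$ (equivalently $b\,e^{b'/(6\lambda)}$), whose monotonicity immediately yields the bound $b(s)\ge b(\tilde s)e^{b'(\tilde s)/(6\lambda)}$ \emph{only} where $b'\le 0$; it therefore must combine this with Lemma \ref{impRemark2} (at most one critical point, necessarily a minimum) and split into the three cases $b'\le 0$, $b'\ge 0$, and $b'$ changing sign once. Your monotone quantity is instead $(b'-6\lambda)/b$, which converts $(\ref{MainIneq})$ into the genuinely linear inequality $b'\ge Kb+6\lambda$ with constant $K=(b'(\tilde s)-6\lambda)/b(\tilde s)$; the integrating factor then gives the explicit comparison function $y(s)$ with no case analysis on the sign of $b'$ and no appeal to Lemma \ref{impRemark2}. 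The trade-off is that the paper's quantity produces the stated constant $c$ directly, whereas yours produces the (at least as good) bound $\min\{b(\tilde s),\,6\lambda/|K|\}$ and needs the extra elementary step $e^{-u}\ge 1-u$ to see that this dominates $c$ --- a step you carry out correctly, including the sign check $u=b'(\tilde s)/(6\lambda)<1$ that makes the division legitimate. Your reduction of case (2) to case (1) via Remark \ref{simply} is the same as the paper's, and the bookkeeping $\phi_z(-\tilde s)/(6\mu)=b'(\tilde s)/(6\lambda)$ is right. The only cosmetic quibble is the phrase ``integrating from $\tilde s$ to $s$'': what you actually use is that the nondecreasing function $(b'-6\lambda)/b$ is bounded below on $[\tilde s,\omega_2)$ by its value at $\tilde s$, which is the intended content and is fine.
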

\begin{proof}%[{\bf Proof of \ref{C1}}]
	%By Lemma  we must consider three cases. If $b'$ is positive, if it is negative or if changes sign once.
	We will prove the proposition in the case where $\lambda>0$. The proof in the remaining case is similar.
	
	Suppose that $b'(s)\leq0,\ \forall s\in[\tilde{s},\omega_{2})$. Note that any solution $b(s)$ of $(\ref{MainIneq})$ satisfies 
	\begin{align*}
		(b'(s)+6\lambda\ln(b(s)))'\geq\frac{(b'(s))^2+8\lambda^2}{b(s)}>0,
	\end{align*}
	what for each $s\in[\tilde{s},\omega_{2})$ implies $e^{b'(\tilde{s})}(b(\tilde{s}))^{6\lambda}\leq e^{b'(s)}(b(s))^{6\lambda}$. As a consequence, for each $s\in[\tilde{s},\omega_{2})$ we have $b(s)\geq b(s)e^{\frac{b'(s)}{6\lambda}}\geq b(\tilde{s})e^{\frac{b'(\tilde{s})}{6\lambda}}\geq c$.
	
	Suppose that $b'(s)\geq0,\ \forall s\in[\tilde{s},\omega_{2})$. Then $b(s)\geq b(\tilde{s})\geq c$.
	
	Suppose that $b'(s)$ changes sign on $[\tilde{s},\omega_{2})$. By Lemma \ref{impRemark2} it happens exactly once at a point of minimum, which we denote by $s_{1}\in(\tilde{s},\omega_{2})$. Then $b'(s)<0$, $\forall s\in[\tilde{s},s_{1})$, $b'(s)>0$, $\forall s\in(s_{1},\omega_{2})$, and $b'(s_{1})=0$. Repeating the argument of the first part we have $b(s)\geq c$, $\forall s\in[\tilde{s},s_{1})$. In particular $b(s_{1})\geq c$. On the other hand, since $b$ is nondecreasing in $[s_{1},\omega_{2})$ we get $b(s)\geq b(s_{1})\geq c$ in this interval. This finishes the proof.
\end{proof}

A solution of $(\ref{MainIneq})$ gives rise to a quantity which turns out to be monotone in many situations, as the lemma below shows.

\begin{lemma}\label{lemmakey}
	Let $b(s)$ be a smooth solution of $(\ref{MainIneq})$ and consider the function
	\begin{align}\label{quantity}
		\sigma(s)=\frac{(b'(s)-4\lambda)^2}{b(s)(b'(s)-2\lambda)},
	\end{align}
	defined whenever $b(s)(b'(s)-2\lambda)\neq0$. If in addition $b'(s)(b'(s)-4\lambda)\neq0$, then
	\begin{align}
		\frac{\sigma'(s)}{b'(s)(b'(s)-4\lambda)}\geq0.
	\end{align}
\end{lemma}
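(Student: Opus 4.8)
The natural approach is to compute $\sigma'$ directly from the quotient rule and then feed in the differential inequality $(\ref{MainIneq})$ to control the sign. Write $\sigma = N/D$ with numerator $N = (b'-4\lambda)^2$ and denominator $D = b(b'-2\lambda)$. Then
\begin{align*}
	\sigma' = \frac{N'D - ND'}{D^2},
\end{align*}
so the sign of $\sigma'$ is governed by $N'D - ND'$. We have $N' = 2(b'-4\lambda)b''$ and $D' = b'(b'-2\lambda) + b b''$. Substituting these in and expanding, the quantity $N'D - ND'$ becomes a polynomial expression in $b$, $b'$, $b''$ which I expect to factor with an overall factor of $(b'-4\lambda)$; after dividing by $b'(b'-4\lambda)$ the claim reduces to showing that the remaining factor, divided by $D^2 = b^2(b'-2\lambda)^2 > 0$, is nonnegative. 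The key point is that the leftover expression should be (up to the positive factor $b^2(b'-2\lambda)^2$ and a harmless sign) a positive multiple of $b''$ multiplied by something, combined with a term that $(\ref{MainIneq})$ rewritten as $bb'' \geq (b'-2\lambda)(b'-4\lambda)$ lets us bound. So the plan is: (i) compute $N'D - ND'$, (ii) extract the factor $(b'-4\lambda)$, (iii) use $bb'' \geq (b'-2\lambda)(b'-4\lambda)$ — equivalently the form in $(\ref{theend})$ — to replace the $b''$-terms and check the resulting polynomial inequality in $b'$ alone (with $b>0$) has the right sign.

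\textbf{Carrying it out.} Concretely, I expect the cleanest route is to differentiate $\ln\sigma = 2\ln|b'-4\lambda| - \ln b - \ln|b'-2\lambda|$, giving
\begin{align*}
	\frac{\sigma'}{\sigma} = \frac{2b''}{b'-4\lambda} - \frac{b'}{b} - \frac{b''}{b'-2\lambda}.
\end{align*}
Multiplying through by $b'(b'-4\lambda)$ and using $\sigma > 0$ exactly when $(b'-2\lambda)$ and $b$ have the same sign — or more carefully, tracking $\mathrm{sgn}(\sigma)$ — reduces the statement to the sign of
\begin{align*}
	b'(b'-4\lambda)\left(\frac{2b''}{b'-4\lambda} - \frac{b'}{b} - \frac{b''}{b'-2\lambda}\right)
	= 2b'b'' - \frac{b'(b'-4\lambda)b'}{b} - \frac{b'(b'-4\lambda)b''}{b'-2\lambda}.
\end{align*}
Clearing the denominator $b(b'-2\lambda)$ (keeping track of its sign) and collecting the $b''$-terms, one gets an expression of the form $b''\cdot[\text{linear in } b'] \cdot b - (b')^2(b'-4\lambda)(b'-2\lambda)$; then substituting the bound $bb'' \geq (b'-2\lambda)(b'-4\lambda)$ in whichever direction the coefficient of $b''$ dictates collapses everything to a polynomial identity/inequality purely in $b'$ and $\lambda$, which should be checkable by inspection (I anticipate it reduces to something like a perfect square or a manifestly signed product such as $2\lambda(b'-4\lambda)(b'-2\lambda)$ against the factor $b'(b'-4\lambda)$, combining to a nonnegative quantity). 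Throughout, the positivity $b > 0$ from the hypothesis of the ambient setup and the nonvanishing assumptions $b(b'-2\lambda) \neq 0$, $b'(b'-4\lambda)\neq0$ guarantee all divisions are legitimate and all denominators have a definite sign on the relevant interval.

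\textbf{Main obstacle.} The genuine difficulty is bookkeeping rather than conceptual: one must handle the sign of $b'-2\lambda$ (which can be positive or negative) when clearing denominators, and ensure the direction in which $(\ref{MainIneq})$ is applied is consistent with the sign of the coefficient of $b''$ that multiplies it — a sign error there would flip the conclusion. I would organize the computation so that $b''$ appears exactly once with a coefficient whose sign I can read off, apply $(\ref{theend})$ precisely once, and then be left with a polynomial in $b'$ alone; verifying that final polynomial inequality (with $b>0$) is the last step and should be routine once the algebra is set up correctly. A useful sanity check along the way is Remark \ref{examplater}: for the rigid example $b(s) = (\frac{R}{n-1}+2\lambda)s$ one has $b'$ constant, so $b'' = 0$ and $\sigma' $ should come out with the predicted sign (indeed $\sigma$ is then monotone from the explicit $1/b$ dependence), which pins down the correct signs in the general argument.
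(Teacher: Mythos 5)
Your plan is essentially the paper's proof: differentiate $\sigma$ (quotient rule or logarithmic derivative), cancel the factor $b'(b'-4\lambda)$, and apply $(\ref{MainIneq})$ in the form $bb''\geq(b'-2\lambda)(b'-4\lambda)$ exactly once. The algebra you leave as ``checkable by inspection'' does collapse as you anticipate, to the identity $\frac{\sigma'}{b'(b'-4\lambda)}=\frac{bb''-(b'-2\lambda)(b'-4\lambda)}{b^{2}(b'-2\lambda)^{2}}\geq0$, where the perfect square in the denominator renders the sign bookkeeping you worry about (the sign of $b'-2\lambda$, and even the positivity of $b$) unnecessary.
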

\begin{proof}
	A straightforward computation involving $(\ref{MainIneq})$ gives:
	\begin{align*}
		\frac{\sigma'(s)}{b'(s)(b'(s)-4\lambda)}&=\frac{2(b'-2\lambda)bb''-(b'-4\lambda)(b'(b'-2\lambda)+bb'')}{b'b^2(b'-2\lambda)^2}\\\noalign{\smallskip}
		&=\frac{bb''-(b'-4\lambda)(b'-2\lambda)}{b^2(b'-2\lambda)^2}\\\noalign{\smallskip}
		&\geq0,
	\end{align*}
	for each $s$ such that $b(s)b'(s)(b'(s)-4\lambda)(b'(s)-2\lambda)\neq0$.
\end{proof}

In presence of additional assumptions we can improve Proposition \ref{C1} by using Lemma \ref{lemmakey} in the following way.

\begin{proposition}\label{C2}
	Let $b:(\omega_{1},\omega_{2})\rightarrow\mathbb{R}$ be a positive smooth solution of $(\ref{MainIneq})$. If there is $\tilde{s}\in(\omega_{1},\omega_{2})$ such that $\lambda(b'(\tilde{s})-2\lambda)<0$, then there is a constant $c_{0}>0$ such that $b(s)\geq c_{0},\ \forall s\in(\omega_{1},\omega_{2})$.
\end{proposition}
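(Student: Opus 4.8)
The plan is to use Remark \ref{simply} to reduce to the case $\lambda>0$, so the hypothesis becomes: there is $\tilde s$ with $b'(\tilde s)<2\lambda$, and we must produce a uniform lower bound for $b$ on the whole interval $(\omega_1,\omega_2)$. The key observation is that the hypothesis $b'(\tilde s)<2\lambda$ propagates: by Lemma \ref{impRemark2}(\ref{item1}), $b'$ changes sign at most once (at a minimum), so the set where $b'<2\lambda$ is an interval containing $\tilde s$, and in fact I claim $b'(s)<2\lambda$ for \emph{all} $s\in(\omega_1,\omega_2)$. Indeed, if $b'(s_0)\geq 2\lambda$ for some $s_0$, then moving toward $\tilde s$ the function $b'$ would have to cross the value $2\lambda$; but at any point where $b'=2\lambda$ we have $2\lambda\in(2\lambda,4\lambda)$ failing and $(\ref{MainIneq})$ gives $bb''\geq(b'-2\lambda)(b'-4\lambda)=0$, and more usefully at such a crossing point the behavior is controlled — I would instead argue directly from $(\ref{theend})$: wherever $2\lambda\le b'\le 4\lambda$ one has $bb''\le 0$ is \emph{not} forced, so let me take the cleaner route below.

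The cleaner route is to split into two cases according to the position of the (at most one) critical point of $b$, and to feed the sign information into Lemma \ref{lemmakey}. Suppose first that $b'(s)<2\lambda$ on all of $(\omega_1,\omega_2)$; then on any subinterval where also $b'\neq 0$ and $b'\neq 4\lambda$ — which holds except possibly at the single critical point — Lemma \ref{lemmakey} applies. On the region $b'<0$ we have $b'(b'-4\lambda)>0$, so $\sigma'\geq 0$ there; on the region $0<b'<2\lambda$ we have $b'(b'-4\lambda)<0$, so $\sigma'\leq 0$ there. Since $b$ has at most one critical point $s_1$ (a minimum), $b'<0$ on $(\omega_1,s_1)$ and $0<b'<2\lambda$ on $(s_1,\omega_2)$ (one of the two pieces may be empty). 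Thus $\sigma$ is nondecreasing on $(\omega_1,s_1]$ and nonincreasing on $[s_1,\omega_2)$, so $\sigma$ attains its minimum over the whole interval at an endpoint, i.e. $\sigma(s)\geq \min\{\lim_{s\to\omega_1^+}\sigma(s),\lim_{s\to\omega_2^-}\sigma(s)\}$, and in any case $\sigma(s)\geq\sigma(\tilde s)$ when $\tilde s$ is suitably placed — more robustly, $\sigma(s)\ge \sigma(s')$ whenever $s'$ lies between $s$ and $s_1$. Now unwind $\sigma$: since $b'-2\lambda<0$, we have
\begin{align*}
\sigma(s)=\frac{(b'(s)-4\lambda)^2}{b(s)(b'(s)-2\lambda)}<0,\qquad\text{hence}\qquad b(s)=\frac{(b'(s)-4\lambda)^2}{\sigma(s)(b'(s)-2\lambda)}.
\end{align*}
Writing $u=b'(s)$, which ranges in $(-\infty,2\lambda)$, and noting $\sigma(s)\le \sigma(\tilde s)=:-m<0$ (taking $\tilde s$ to be the critical point if there is one, or using monotonicity to compare with it), we get
\begin{align*}
b(s)=\frac{(u-4\lambda)^2}{\sigma(s)(u-2\lambda)}\ \geq\ \frac{1}{m}\cdot\frac{(u-4\lambda)^2}{2\lambda-u},
\end{align*}
and the function $u\mapsto (u-4\lambda)^2/(2\lambda-u)$ on $(-\infty,2\lambda)$ has a positive infimum (it tends to $+\infty$ at both ends and its minimum value is $8\lambda$, attained at $u=0$). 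Hence $b(s)\geq 8\lambda/m=:c_0>0$ for all $s$, which is the claim.

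The main obstacle I anticipate is bookkeeping the sign of $b'$ and the placement of $\tilde s$ relative to the critical point $s_1$: to get a \emph{uniform} bound I need to know that $\sigma$ can be compared, via the monotonicity of Lemma \ref{lemmakey}, to a value that is bounded away from $0$ from below in absolute value. If $\tilde s$ happens to lie on the same side of $s_1$ as the point $s$ being estimated, monotonicity gives $\sigma(s)$ between $\sigma(\tilde s)$ and $\sigma(s_1)$ directly; if it lies on the opposite side, I pass through $s_1$, using that $\sigma(s_1)=(b'(s_1)-4\lambda)^2/(b(s_1)(b'(s_1)-2\lambda))=16\lambda^2/(-2\lambda b(s_1))<0$ is finite and negative, and that $\sigma$ increases up to $s_1$ and decreases after, so $\sigma(s)\le\sigma(s_1)<0$ everywhere — giving a clean bound with $m=-\sigma(s_1)$, or $m=-\sigma(\tilde s)$ if $b$ has no critical point at all (then $b'$ has constant sign and one endpoint argument suffices). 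Finally I should double-check the degenerate possibility that $b'\equiv 2\lambda$ on a subinterval or that $b'=4\lambda$ somewhere; the former is incompatible with $(\ref{MainIneq})$ (it would force $bb''\ge 0$ with $b''=0$, i.e. $-8\lambda^2\ge 0$, false), and the latter can occur only in the region $b'\ge 2\lambda$, which we have excluded, so Lemma \ref{lemmakey} is applicable throughout as needed.
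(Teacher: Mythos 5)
Your strategy is essentially the paper's --- reduce to $\lambda>0$, use Lemma \ref{impRemark2} to locate the (at most one) critical point of $b$, invoke the monotonicity of $\sigma$ from Lemma \ref{lemmakey}, and unwind the definition of $\sigma$ via $\inf_{u<2\lambda}(u-4\lambda)^2/(2\lambda-u)=8\lambda$ to obtain $b\ge 8\lambda/\lvert\sigma\rvert$ (the paper's $b\ge -8\lambda/\sigma(\tilde s)$). However, your central inequality runs in the wrong direction. From $\sigma(s)\le -m<0$, i.e.\ $\lvert\sigma(s)\rvert\ge m$, one gets
\begin{align*}
b(s)=\frac{(u-4\lambda)^2}{\lvert\sigma(s)\rvert(2\lambda-u)}\ \le\ \frac{1}{m}\cdot\frac{(u-4\lambda)^2}{2\lambda-u},
\end{align*}
which is an \emph{upper} bound for $b$; a \emph{lower} bound requires $\lvert\sigma(s)\rvert\le m$, i.e.\ $\sigma(s)\ge -m$. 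This matters because, as you correctly observe, $\sigma$ is nondecreasing where $b'<0$ and nonincreasing where $0<b'<4\lambda$, so $\sigma$ attains its \emph{maximum} at the critical point $s_1$ and decreases toward both open endpoints. Hence your comparison ``$\sigma(s)\le\sigma(s_1)$ everywhere, take $m=-\sigma(s_1)$'' yields no control precisely where it is needed, namely near the endpoint toward which $b$ might be decaying to $0$. (When $b$ has an interior critical point the conclusion $b\ge b(s_1)$ is true for the trivial reason that $s_1$ is the global minimum; the $\sigma$ machinery is only genuinely needed when $b'$ has constant sign.) The usable comparison is the paper's: take the reference point on the side of $s$ toward $s_1$ (or toward $\tilde s$); for instance on $(\omega_1,\tilde s]$ with $b'\ge 0$, $\sigma$ is nonincreasing, so $\sigma(s)\ge\sigma(\tilde s)$ for $s\le\tilde s$, whence $b(s)\ge -8\lambda/\sigma(\tilde s)$.

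A second gap: you announce ``suppose first that $b'(s)<2\lambda$ on all of $(\omega_1,\omega_2)$'' but never return to the complementary case. Lemma \ref{impRemark2} only forces $b'\le b'(\tilde s)<2\lambda$ on $(\omega_1,\tilde s]$; to the right of $\tilde s$ the derivative may cross $2\lambda$ and even $4\lambda$, since $(\ref{MainIneq})$ does not constrain the sign of $b''$ when $2\lambda<b'<4\lambda$. That case is harmless (either $b$ acquires a critical point, and then $b\ge b(s_1)$ everywhere, or $b'>0$ on $[\tilde s,\omega_2)$ and $b\ge b(\tilde s)$ there; the paper simply quotes Proposition \ref{C1} to cover $[\tilde s,\omega_2)$ and restricts the $\sigma$ argument to $(\omega_1,\tilde s]$), but it must be addressed for the bound to hold on the whole interval.
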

\begin{proof}
	Because $\lambda(b'(\tilde{s})-2\lambda)<0$ and $b(\tilde{s})>0$ we can consider the positive constant
	\begin{align}
		c_{1}=\min\left\{b(\tilde{s}),-\frac{8\lambda}{\sigma(\tilde{s})}\right\},
	\end{align}
	where $\sigma$ is the function defined in $(\ref{quantity})$. We will show that
	\begin{enumerate}
		\item $b(s)\geq c_{1},\ \forall s\in(\omega_{1},\tilde{s}]$, if $\lambda>0$;
		\item $b(s)\geq c_{1},\ \forall s\in[\tilde{s},\omega_{2})$, if $\lambda<0$.
	\end{enumerate}
	If we take $c_{0}=\min\{c,c_{1}\}$, where $c$ is the constant given by Proposition \ref{C1}, then $b(s)\geq c_{0},\ \forall s\in(\omega_{1},\omega_{2})$, and the proposition is proved. In view of Remark \ref{simply}, we may assume that $\lambda>0$.
	
	Suppose that $b'(s)\leq0$ for $s\in(\omega_{1},\tilde{s}]$. Then $b(s)\geq b(\tilde{s})\geq c_{1},\ \forall s\in(\omega,\tilde{s}]$.
	
	Suppose that $b'(s)\geq0$ for $s\in(\omega_{1},\tilde{s}]$. Consider $\sigma(s),\ s\in(\omega_{1},\tilde{s}]$, defined as in $(\ref{quantity})$. It follows from Lemma \ref{lemmakey} that $\sigma(s)<0$ and $\sigma'(s)\leq0$. Therefore, $\sigma(\tilde{s})\leq\sigma(s)<0$. Equivalently, $0\leq(b'(s))^2\leq(b'(s)-2\lambda)(\sigma(\tilde{s})b(s)+8\lambda)$. On the other hand, item $(\ref{item2})$ of Lemma \ref{impRemark2} implies that $b'(s)<2\lambda<4\lambda$, $\forall s\in(\omega_{1},\tilde{s}]$, which gives $\sigma(\tilde{s})b(s)+8\lambda\leq0$, and then $b(s)\geq-\frac{8\lambda}{\sigma(\tilde{s})}\geq c_{1},\ \forall s\in(\omega,\tilde{s}]$.
	
	Suppose that $b'(s)$ changes sign on $(\omega_{1},\tilde{s}]$. Let $s_{1}\in(\omega_{1},\tilde{s})$ be the only point where $b(s)$ takes its minimum, given by Lemma \ref{impRemark2}. Then $b'(s)<0$, $\forall s\in(\omega_{1},s_{1})$, and $b'(s)\geq0$, $\forall s\in[s_{1},\tilde{s}]$. Proceeding as in the end of the proof of Proposition \ref{C1} we get $b(s)\geq-\frac{8\lambda}{\sigma(\tilde{s})}\geq c_{1}>0,\ \forall s\in(\omega,\tilde{s}]$.
\end{proof}

The next result shows that certain solutions of $(\ref{MainIneq})$ grow exponentially. Later, in Proposition \ref{pp}, we show that these solutions cannot be constructed as those of Proposition \ref{strategylemma}.

\begin{proposition}\label{PROP1}
	Let $b:(\omega_{1},\omega_{2})\rightarrow\mathbb{R}$ be a positive smooth solution of $(\ref{MainIneq})$. If there is $\tilde{s}\in(\omega_{1},\omega_{2})$ satisfying 
	\begin{align}\label{condI}
		(b'(\tilde{s})-2\lambda)(b'(\tilde{s})-4\lambda)>0,
	\end{align}
	and
	\begin{align}\label{condII}
		(b'(\tilde{s})-2\lambda)b'(\tilde{s})>0,
	\end{align}
	then there are positive constants $\kappa$ and $K$ so that
	\begin{enumerate}
		\item \label{ine1POS}
		$b(s)\geq K^{2}e^{2\kappa s},\ \forall s\in[\tilde{s},\omega_{2})$, if $b'(\tilde{s})>\max\{2\lambda,4\lambda\}$;
		\item \label{ine2POS}
		$b(s)\geq K^{2}e^{-2\kappa s},\ \forall s\in(\omega_{1},\tilde{s}]$, if $b'(\tilde{s})<\min\{2\lambda,4\lambda\}$.
	\end{enumerate}
\end{proposition}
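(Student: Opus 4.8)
The plan is to mimic the structure of Propositions \ref{C1} and \ref{C2}: reduce to $\lambda>0$ via Remark \ref{simply} (so that $\max\{2\lambda,4\lambda\}=4\lambda$ and $\min\{2\lambda,4\lambda\}=2\lambda$), read off from $(\ref{condI})$ and $(\ref{condII})$ which regime $b'(\tilde s)$ lies in, propagate that regime along the curve with item $(\ref{item2})$ of Lemma \ref{impRemark2}, feed the outcome into the monotone quantity $\sigma$ of $(\ref{quantity})$, and finally integrate a differential inequality for $\ln b$. A short sign analysis of $(\ref{condI})$ and $(\ref{condII})$ shows that together they force either $b'(\tilde s)>4\lambda$ or $b'(\tilde s)<0$ — exactly the two hypotheses occurring in item $(\ref{item2})$ of Lemma \ref{impRemark2} — and I would treat these as two cases, yielding $(\ref{ine1POS})$ and $(\ref{ine2POS})$ respectively. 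In both cases the first move is to apply that lemma: in the first case $b'(s)\ge b'(\tilde s)>4\lambda$ on all of $[\tilde s,\omega_2)$, in the second $b'(s)\le b'(\tilde s)<0$ on all of $(\omega_1,\tilde s]$. This is precisely what guarantees that $\sigma$ is smooth and of constant sign on the relevant half-interval and that $b'(b'-4\lambda)>0$ there, so that Lemma \ref{lemmakey} applies and $\sigma$ is monotone.

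In the first case $\sigma$ is positive and nondecreasing, hence $\sigma(s)\ge\sigma(\tilde s)=:\sigma_0>0$. Writing $(\ref{quantity})$ as $(b'-4\lambda)^2=\sigma\,b\,(b'-2\lambda)$ and using $b'-2\lambda\ge b'-4\lambda>0$ and $b>0$, I would get $b'-4\lambda\ge\sigma_0 b$, hence $(\ln b)'\ge\sigma_0$ on $[\tilde s,\omega_2)$; integrating from $\tilde s$ gives $b(s)\ge b(\tilde s)e^{\sigma_0(s-\tilde s)}$, so one takes $\kappa=\sigma_0/2$ and $K=\sqrt{b(\tilde s)}\,e^{-\kappa\tilde s}$.

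The second case is where the real work lies. Here $\sigma$ is negative and nondecreasing, so $\sigma(s)\le\sigma(\tilde s)<0$; set $\beta_0:=-\sigma(\tilde s)>0$. The same rewriting of $(\ref{quantity})$, namely $(4\lambda-b')^2=-\sigma\,b\,(2\lambda-b')$, together with the elementary inequality $(4\lambda-b')^2\le4(2\lambda-b')^2$ (valid because $b'<0$), yields $2\lambda-b'\ge\tfrac{\beta_0}{4}\,b$ on $(\omega_1,\tilde s]$, that is $-b'\ge\tfrac{\beta_0}{4}\,b-2\lambda$. The difficulty is that this controls $2\lambda-b'$ rather than $-b'$, so the estimate degenerates when $b$ is small and does not by itself give exponential growth. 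To get around this I would also invoke the uniform lower bound $-b'(s)\ge-b'(\tilde s)>0$ (again item $(\ref{item2})$ of Lemma \ref{impRemark2}) and then split pointwise: where $b(s)\ge 16\lambda/\beta_0$ the first estimate already gives $-b'\ge\tfrac{\beta_0}{8}\,b$, and where $b(s)<16\lambda/\beta_0$ the uniform bound $-b'\ge -b'(\tilde s)$ exceeds a fixed positive multiple of $b$. Either way $-b'\ge\kappa' b$ on all of $(\omega_1,\tilde s]$ for a suitable $\kappa'>0$ depending only on $b(\tilde s)$, $b'(\tilde s)$ and $\lambda$; integrating from $s$ to $\tilde s$ gives $b(s)\ge b(\tilde s)e^{\kappa'(\tilde s-s)}$, and one takes $\kappa=\kappa'/2$, $K=\sqrt{b(\tilde s)}\,e^{\kappa\tilde s}$.

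So the main obstacle is the asymmetry between the two cases: in the first the stray additive term ($+4\lambda$) only strengthens the estimate, whereas in the second the stray $+2\lambda$ works against the desired conclusion and must be absorbed using the separate uniform bound on $-b'$. The only routine point to keep in mind — and the reason Lemma \ref{impRemark2} must be applied before Lemma \ref{lemmakey} — is checking that $b\,b'\,(b'-2\lambda)(b'-4\lambda)$ never vanishes on the half-interval under consideration, so that $\sigma$ is genuinely smooth there and Lemma \ref{lemmakey} is available.
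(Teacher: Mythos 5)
Your proposal is correct and follows the same skeleton as the paper's proof: reduce to $\lambda>0$ via Remark \ref{simply}, observe that $(\ref{condI})$ and $(\ref{condII})$ force $b'(\tilde s)>4\lambda$ or $b'(\tilde s)<0$, propagate that sign with Lemma \ref{impRemark2}, and use the monotonicity of $\sigma$ from Lemma \ref{lemmakey} to produce a first-order differential inequality that integrates to exponential growth. The execution of the last step differs. In Case 1 the paper expands $\sigma(s)\ge\sigma(\tilde s)$ into a quadratic in $b'$, factors it as $B_{+}B_{-}\ge0$, and rules out $B_{-}<0$ to get $b'\ge\tfrac12(\tilde\sigma b+8\lambda)$; your comparison $b'-2\lambda\ge b'-4\lambda>0$ reaches $b'\ge\sigma_0 b+4\lambda$ more directly. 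In Case 2 the paper passes to $\phi(z)=b(-z)$, derives the linear inequality $\phi_z\ge\tilde\sigma\phi+8\mu$ with $\mu=-\lambda<0$, and handles the negative additive term by integrating the linear ODE exactly and verifying, via the algebraic identity $(\tilde\sigma\phi(\tilde z)+8\mu)(\phi_z(\tilde z)-2\mu)=(\phi_z(\tilde z))^2$, that the resulting integration constant is positive; you instead absorb the stray $-2\lambda$ by a pointwise dichotomy on the size of $b$, using the uniform bound $-b'(s)\ge -b'(\tilde s)>0$ from item $(\ref{item2})$ of Lemma \ref{impRemark2} on the set where $b$ is small. Both devices are valid; the paper's yields the sharper rate $\kappa=\tilde\sigma/2$ with an explicit $K$, while yours is more elementary and avoids both the reflection and the quadratic-formula factoring at the cost of a smaller (minimum of two quantities) exponent. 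Your identification of the asymmetry between the two cases as the real difficulty matches exactly where the paper expends its extra effort.
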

\begin{proof}
	In view of Remark \ref{simply} we assume that $\lambda>0$. Inequality $(\ref{condI})$ gives rise to two cases:
	\begin{case}\label{Ca1}
		$b'(\tilde{s})>4\lambda>2\lambda$.
	\end{case}
	It follows from item $(\ref{item2})$ of Lemma \ref{impRemark2} that $b'(s)>4\lambda>2\lambda,\ \forall s\in[\tilde{s},\omega_{2})$. If $\sigma(s)$ is defined as in $(\ref{quantity})$, it follows from Lemma \ref{lemmakey} that $\sigma(s)>0$ and $\sigma'(s)\geq0$, and then $\sigma(s)\geq\tilde{\sigma}=\sigma(\tilde{s})>0,\ \forall s\in[\tilde{s},\omega_{2})$. Equivalently, for each $s\in[\tilde{s},\omega_{2})$ we have
	\begin{align*}
		(b'(s))^2-(\tilde{\sigma}b(s)+8\lambda)b'(s)+2\lambda(\tilde{\sigma}b(s)+8\lambda)\geq0,
	\end{align*}
	which gives 
	\begin{align}\label{signP}
		B_{+}(s)B_{-}(s)\geq0,
	\end{align}
	with
	\begin{align}\label{defB}
		\begin{split}
			B_{+}(s)=b'(s)-\frac{1}{2}\left(\tilde{\sigma}b(s)+8\lambda+\sqrt{\tilde{\sigma}b(s)(\tilde{\sigma}b(s)+8\lambda)}\right),\\
			B_{-}(s)=b'(s)-\frac{1}{2}\left(\tilde{\sigma}b(s)+8\lambda-\sqrt{\tilde{\sigma}b(s)(\tilde{\sigma}b(s)+8\lambda)}\right).
		\end{split}
	\end{align}
	Once $b'(s)>4\lambda$, we cannot have $B_{-}(s)<0$, which by $(\ref{signP})$ implies that $B_{+}(s)\geq0$, and then
	\begin{align*}
		b'(s) \geq\frac{1}{2}(\tilde{\sigma}b(s)+8\lambda),\ \forall s\in[\tilde{s},\omega_{2}).
	\end{align*}
	Integrating the inequality above after dropping its last term we get $b(s)\geq K^2e^{2\kappa s},\ \forall s\in[\tilde{s},\omega_{2})$, with $K^2=b(\tilde{s})e^{-2\kappa\tilde{s}}>0$ and $\kappa=\frac{\sigma(\tilde{s})}{4}>0$, which is $(\ref{ine1POS})$.
	
	\begin{case}\label{Ca2}
		$b'(\tilde{s})<2\lambda<4\lambda$.
	\end{case}
	It follows from $(\ref{condII})$ that $b'(\tilde{s})<0$ and then item $(\ref{item1})$ of Lemma \ref{impRemark2} ensures that $b'(s)<0$, $\forall s\in(\omega_{1},\tilde{s}]$. Consider $\phi(z)=b(-z),\ z\in[\tilde{z},-\omega_{1})$, with $\tilde{z}=-\tilde{s}$ and $\mu=-\lambda$. Then $\phi(z)$ satisfies $(\ref{MainIneq})$ (see Remark \ref{simply}) and $\phi_{z}(z)>0$. Consider $\sigma(z)$ defined as in $(\ref{quantity})$ using $\phi(z)$ and $\mu$. It follows from Lemma \ref{lemmakey} that $\sigma(z)>0$ and $\sigma_{z}(z)\geq0$, $\forall z\in[\tilde{z},-\omega_{1})$. Considering $\tilde{\sigma}=\sigma(\tilde{z})>0$ we have
	\begin{align}\label{ineqality2}
		\tilde{\sigma}\phi(z)+8\mu>0,\ \forall z\in[\tilde{z},-\omega_{1}).
	\end{align}
	To see this first notice that by the definition of $\tilde{\sigma}$ we have $(\tilde{\sigma}\phi(\tilde{z})+8\mu)(\phi_{z}(\tilde{z})-2\mu)=(\phi_{z}(\tilde{z}))^2$. This proves $(\ref{ineqality2})$ at $\tilde{z}$. Using that $\phi$ is increasing in $[\tilde{z},-\omega_{1})$, $(\ref{ineqality2})$ is proved in this interval. Considering functions as in $(\ref{defB})$ and proceeding as before, we get $\phi_{z}(z)\geq\tilde{\sigma}\phi(z)+8\mu,\ \forall z\in[\tilde{z},-\omega_{1})$. Returning to the variable $s$ we get
	\begin{align*}
		b'(s)\leq-\tilde{\sigma}b(s)+8\lambda,\ \forall s\in(\omega_{1},\tilde{s}],
	\end{align*}
	which after integrating gives $b(s)\geq K^2e^{-2\kappa s},\ \forall s\in(\omega_{1},\tilde{s}]$, with $\kappa=\frac{\tilde{\sigma}}{2}>0$ and $K^2=\left(b(\tilde{s})-\frac{8\lambda}{\tilde{\sigma}}\right)e^{\tilde{\sigma}\tilde{s}}=\left(\phi(\tilde{z})+\frac{8\mu}{\tilde{\sigma}}\right)e^{-\tilde{\sigma}\tilde{z}}>0$, by $(\ref{ineqality2})$. This proves $(\ref{ine2POS})$.
\end{proof}

\section{Behavior of $f$ and Proof of Theorem \ref{maintheorem}}\label{gradestimates}

In this section, we use the estimates obtained in the previous one to prove Theorem \ref{maintheorem}. Our first step is to use Proposition \ref{C1} to show that $f$ is unbounded and has at most one critical level.

\begin{proposition}\label{P1}
	Let $(M^n,g,f,\lambda)$, $\lambda\neq0$, be a complete Schouten soliton with $f$ nonconstant. If $p_{0}\in M$ is a critical point of $f$, then
	\begin{enumerate}
		\item \label{one1} $\displaystyle f(p_{0})=\inf_{M}{f}$, if $\lambda>0$;%$f(p)\geq f(p_{0}),\ \forall p\in M$, if $\lambda>0$;
		\item \label{two2} $\displaystyle f(p_{0})=\sup_{M}{f}$, if $\lambda<0$.%$f(p)\leq f(p_{0}),\ \forall p\in M$, if $\lambda<0$.
	\end{enumerate}
\end{proposition}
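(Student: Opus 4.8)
I will treat the case $\lambda>0$ (so that a critical point should be a minimum); the case $\lambda<0$ is handled by the same argument with the roles of $\inf$ and $\sup$, and of the forward and backward ends of the integral curves, interchanged, using item $(2)$ of Proposition \ref{C1} in place of item $(1)$. Fix a critical point $p_{0}$ of $f$ and suppose, for contradiction, that $\inf_{M}f<f(p_{0})$, i.e.\ that there is a point $q$ with $f(q)<f(p_{0})$.

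The only place where Proposition \ref{C1} is used is the following claim: \emph{for every regular point $p$, the maximal integral curve $\alpha_{p}\colon(\omega_{1},\omega_{2})\to M$ of $\nabla f/|\nabla f|^{2}$, reparametrized via $(\ref{lindepen})$ so that $f(\alpha_{p}(s))=s$, has $\omega_{2}=+\infty$.} To prove this, suppose $\omega_{2}<\infty$. Applying Proposition \ref{C1} with any $\tilde s\in(\omega_{1},\omega_{2})$ gives a constant $c>0$ with $b(s)=|\nabla f(\alpha_{p}(s))|^{2}\geq c$ for all $s\in[\tilde s,\omega_{2})$; by $(\ref{dist})$ the curve $\alpha_{p}|_{[\tilde s,\omega_{2})}$ then has length at most $(\omega_{2}-\tilde s)/\sqrt{c}<\infty$, so by completeness of $M$ it converges, as $s\to\omega_{2}^{-}$, to some $p_{\ast}\in M$. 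Continuity forces $f(p_{\ast})=\omega_{2}$ and $|\nabla f(p_{\ast})|^{2}=\lim_{s}b(s)\geq c>0$, so $p_{\ast}$ is regular; but then the integral curve of the locally smooth field $\nabla f/|\nabla f|^{2}$ through $p_{\ast}$ extends $\alpha_{p}$ beyond $\omega_{2}$, contradicting maximality. In particular $f$ is unbounded above, which already records one conclusion of the theorem.

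The contradiction will be obtained by producing a regular point whose forward integral curve instead terminates at the finite level $f(p_{0})$. First, the identities control $p_{0}$ itself: at $p_{0}$ one has $\nabla f=0$, so $(\ref{IdentitySch})$ reads $\bigl(\tfrac{R}{n-1}+2\lambda\bigr)R=2|Ric|^{2}$ there, and combined with $(n-1)|Ric|^{2}\geq R^{2}$ (valid wherever $\nabla f\neq0$, hence at $p_{0}$ by approximation with regular points) this gives $0\leq R(p_{0})\leq2(n-1)\lambda$; by $(\ref{trace})$ then $\Delta f(p_{0})\geq 2\lambda>0$, so $\nabla\nabla f(p_{0})$ has a positive eigenvalue and $p_{0}$ is not a local maximum. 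If $p_{0}$ is also not a local minimum, then $f$ takes values $<f(p_{0})$ arbitrarily near $p_{0}$, and one finds a regular point $x$ near $p_{0}$ in the stable set of $p_{0}$ for the $\nabla f$-flow — equivalently, $\alpha_{x}$ reparametrized by $f$ converges to $p_{0}$ as $s\uparrow f(p_{0})$ — whence $\omega_{2}(x)=f(p_{0})<\infty$, contradicting the previous paragraph. There remains the possibility that $p_{0}$ is a local minimum with $f(p_{0})>\inf_{M}f$: here one takes the component $V$ of $\{f<f(p_{0})\}$ containing $q$ (so $\partial V\subseteq f^{-1}(f(p_{0}))$), locates in $\overline V$ an interior minimizer $z$ of $f$, which is a critical point with $f(z)<f(p_{0})$ and, by the step just carried out, again a local minimum; feeding this back into a min–max comparison of $f$ along paths joining two local minima at distinct levels then yields a critical point that is neither a local minimum nor a local maximum, contradicting what was shown above.

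The main obstacle is this last paragraph. The step "a non-minimum critical point is the forward limit of some regular integral curve'' is immediate when $\nabla\nabla f(p_{0})$ is nondegenerate (stable manifold theorem), but the rigid examples of the introduction carry an entire Einstein factor of degenerate, non-isolated critical points, so one must instead argue through the center/stable set of the gradient flow — or invoke real-analyticity of the soliton together with the \L ojasiewicz gradient inequality to guarantee convergence of flow lines. Likewise, the "local-but-not-global minimum'' case needs a compactness input for the min–max step that is not yet available, since the properness of $f$ is only established later; it must therefore be replaced by a direct argument that again exploits the fact, proved in the second paragraph, that no maximal integral curve of $\nabla f/|\nabla f|^{2}$ can terminate forward at a finite level of $f$.
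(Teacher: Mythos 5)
Your proposal contains a genuine gap, which you yourself flag: the step producing a regular point whose forward flow line terminates at $p_{0}$ at a finite value of $s$. Neither the stable-manifold argument (the Hessian can be degenerate and the critical set can be a whole submanifold, as in the rigid examples) nor the min--max argument for the ``local but not global minimum'' case (which needs a properness of $f$ not yet available at this stage) is actually carried out, so the proof as written does not close. There is also a smaller unjustified point: the bound $0\leq R(p_{0})\leq 2(n-1)\lambda$ at a critical point is obtained ``by approximation with regular points,'' which fails if $p_{0}$ lies in the interior of the critical set (the paper has to treat that configuration separately in the proof of Theorem \ref{maintheorem}).

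The idea you are missing, and which is the paper's whole trick, is to reach $p_{0}$ by a \emph{minimizing geodesic} rather than by a flow line chosen in advance. Pick a regular value $f_{1}<f(p_{0})$ (regular values are dense), and let $\gamma:[0,l]\to M$ be a normalized minimizing geodesic from the closed submanifold $f^{-1}(f_{1})$ to $p_{0}$; it exists by completeness and meets $f^{-1}(f_{1})$ orthogonally at $\gamma(0)=p_{1}$. Now $(\ref{Riccizero})$ together with the soliton equation shows that $\nabla_{\nabla f}\nabla f$ is proportional to $\nabla f$, i.e.\ the integral curves of $\nabla f/|\nabla f|$ are geodesics; by uniqueness of geodesics, $\gamma$ \emph{coincides} with the reparametrized integral curve through $p_{1}$ up to the first parameter $t_{*}\leq l$ at which $\gamma$ meets a critical point. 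On the corresponding $s$-interval $[\tilde s,s_{*})$ the function $b(s)=|\nabla f(\alpha(s))|^{2}$ solves $(\ref{MainIneq})$, and Proposition \ref{C1} gives $b\geq c>0$ there; letting $t\to t_{*}$ then contradicts $|\nabla f(\gamma(t_{*}))|=0$ by continuity. This avoids any convergence-of-flow-lines issue: you never need a flow line to converge to $p_{0}$, only the geodesic to hit \emph{some} critical point, which it does because its endpoint $p_{0}$ is one. (Your second paragraph, showing $\omega_{2}=+\infty$ for every forward maximal flow line, is correct, but it is the content of Proposition \ref{P2} rather than what is needed here.)
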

\begin{proof}
	Assume that $\lambda>0$ and suppose that $p_{0}$ is not a global minimum of $f$. As the set of regular values of $f$ is dense in $f(M)$, there is a regular value $f_{1}$ of $f$ such that $f_{1}<f(p_{0})$. Since $f^{-1}(f_{1})$ is a closed submanifold of $M$, there are $p_{1}\in f^{-1}(f_{1})$ and a normalized geodesic $\gamma:[0,l]\rightarrow M$ so that $\gamma(0)=p_{1}$ and $\gamma(l)=p_{0}$. Since $\gamma$ is orthogonal to $f^{-1}(f_{1})$ at $p_{1}$, the uniqueness property of geodesics asserts that $\gamma(t)=(\alpha\circ s)(t)$ as long as $\gamma$ does not hit a critical point of $f$, where $\alpha(s)$ and $s(t)$ are given as in $(\ref{changparam})$ and $(\ref{geodintcurv})$, respectively.
	
	Let $t_{*}\in(0,l]$ be the first parameter satisfying $\nabla f(\gamma(t_{*}))=0$. Using $(\ref{changparam})$ we write $\alpha(s)=\gamma(t(s)),\ s\in[\tilde{s},s_{*})$, where $[\tilde{s},s_{*})$ is the image of $[0,t_{*})$ by $t(s)$ and $\alpha(\tilde{s})=p_{1}$. By Proposition \ref{strategylemma}, the function $b(s)=|\nabla f(\alpha(s))|^2$, $s\in[s_{1},s_{2})$, is a solution of $(\ref{MainIneq})$ and then, by Proposition \ref{C1}, there is a constant $c>0$ so that $b(s)\geq c$, for all $s\in[\tilde{s},s_{*})$. Therefore, $|\nabla f(\gamma(t))|^2\geq c,\ \forall t\in[0,t_{*})$,	what is a contradiction, in view of $|\nabla f(\gamma(t_{*}))|^2=0$. This proves $(\ref{one1})$.
	
	If $\lambda<0$ we use Remark $(\ref{simply})$, reducing the analysis to case $(\ref{one1})$, from which we conclude that $\displaystyle (-f)(p_{0})=\min_{M}{(-f)}$, which finishes the proof of $(\ref{two2})$.
\end{proof}

\begin{proposition}\label{P2}
	Let $(M^n,g,f,\lambda)$, $\lambda\neq0$, be a complete Schouten soliton and $p\in M$ a regular point of $f$. If $b:(\omega_{1}(p),\omega_{2}(p))\rightarrow\mathbb{R}$ is defined as in $(\ref{composition})$, then
	\begin{enumerate}
		\item\label{one} $\omega_{2}(p)=+\infty$ and $\displaystyle\sup_{M}f=+\infty$, if $\lambda>0$;
		\item\label{two} $\omega_{1}(p)=-\infty$ and $\displaystyle\inf_{M}f=-\infty$, if $\lambda<0$.
	\end{enumerate}
\end{proposition}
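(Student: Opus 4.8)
The plan is to argue by contradiction, using Proposition~\ref{C1} to produce a uniform positive lower bound for $|\nabla f|^2$ along $\alpha$ and then the completeness of $M$ to extend $\alpha$ beyond a finite endpoint. Assume first $\lambda>0$ and suppose, for contradiction, that $\omega_{2}(p)<+\infty$. Fix $s_{0}$ with $\alpha(s_{0})=p$. Since $\alpha$ is an integral curve of $\frac{\nabla f}{|\nabla f|^2}$, the gradient $\nabla f$ does not vanish along $\alpha$, so $b(s)=|\nabla f(\alpha(s))|^2$ is a \emph{positive} smooth function on $(\omega_{1}(p),\omega_{2}(p))$, and by Proposition~\ref{strategylemma} it is a solution of $(\ref{MainIneq})$. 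Applying Proposition~\ref{C1} with $\tilde{s}=s_{0}$ gives a constant $c>0$ such that $b(s)\geq c$ for every $s\in[s_{0},\omega_{2}(p))$.

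Next I would use this bound to force the convergence of $\alpha$. Along $\alpha$ one has $|\alpha'(s)|=|\nabla f(\alpha(s))|^{-1}=b(s)^{-1/2}\leq c^{-1/2}$, so $\alpha|_{[s_{0},\omega_{2}(p))}$ has finite length; equivalently, by $(\ref{changparam})$ the arc-length parameter $t(s)=\int_{s_{0}}^{s}b(\xi)^{-1/2}\,d\xi$ is bounded above by $(\omega_{2}(p)-s_{0})\,c^{-1/2}$. Hence the unit-speed geodesic $(\alpha\circ s)(t)$ of $(\ref{geodintcurv})$ is defined on a bounded interval $[0,T')$, and by completeness of $M$ (Hopf--Rinow) it extends to $t=T'$; consequently $\alpha(s)$ converges to some point $q\in M$ as $s\to\omega_{2}(p)$. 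By continuity of $|\nabla f|^2$ together with $b\geq c$, we obtain $|\nabla f(q)|^2\geq c>0$, so $q$ is a regular point of $f$.

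Then $\frac{\nabla f}{|\nabla f|^2}$ is a smooth vector field on a neighborhood of $q$, and since a maximal integral curve of a smooth vector field on the open set $M\setminus\{\nabla f=0\}$ whose domain is bounded on the right must eventually leave every compact subset of that set, choosing a small closed ball about $q$ disjoint from $\{\nabla f=0\}$ contradicts $\alpha(s)\to q$. Therefore $\omega_{2}(p)=+\infty$, and then $(\ref{lindepen})$ gives $f(\alpha(s))=f(p)+s-s_{0}\to+\infty$ as $s\to+\infty$, so $\displaystyle\sup_{M}f=+\infty$. The case $\lambda<0$ is entirely analogous: running the same argument toward $\omega_{1}(p)$ and invoking Proposition~\ref{C1} in the case $\lambda<0$ yields $\omega_{1}(p)=-\infty$ and $\displaystyle\inf_{M}f=-\infty$.

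The only delicate step is the passage from the uniform lower bound $b\geq c$ to the convergence of $\alpha$, i.e.\ ruling out that the reparametrized curve escapes to infinity within a finite range of $s$; this is exactly where the completeness of $M$ is used, in combination with Proposition~\ref{C1}. Once convergence is established, recognizing the limit as a regular point and deriving the contradiction with maximality is routine.
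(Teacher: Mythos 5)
Your proof is correct and follows essentially the same route as the paper's: apply Proposition \ref{C1} to get the uniform lower bound $b\geq c>0$, deduce that $\alpha$ has finite length on $[s_{0},\omega_{2}(p))$ so that it converges by completeness to a limit point which is regular, and contradict maximality of the integral curve; unboundedness of $f$ then follows from $(\ref{lindepen})$. No issues.
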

\begin{proof}
	Assume that $\lambda>0$, otherwise we use Remark $(\ref{simply})$. Assume by contradiction that there is a regular point $p\in M$ such that $\omega_{2}(p)<+\infty$. Let $\alpha_{p}:(\omega_{1}(p),\omega_{2}(p))\rightarrow M$ be the maximal integral curve of $\nabla f/|\nabla f|^2$ so that $\alpha_{p}(\tilde{s})=p$, for some $\tilde{s}\in(\omega_{1}(p),\omega_{2}(p))$. If $b:[\tilde{s},\omega_{2}(p))\rightarrow\mathbb{R}$ is defined as in $(\ref{composition})$ using $\alpha_{p}$, Lemma \ref{C1} gives a constant $c>0$ so that
	\begin{align}\label{estimate}
		|\nabla f(\alpha_{p}(s))|=\sqrt{b(s)}\geq\sqrt{c},
	\end{align}
	for $s\in[\tilde{s},\omega_{2}(p))$. Let $(s_{k})_{k}$ be a monotone sequence in $[\tilde{s},\omega_{2}(p))$ converging to $\omega_{2}(p)$. By $(\ref{dist})$ we have
	%If $\gamma_{p}(t)$ is the reparametrization by arclength of $\alpha_{p}(s)$ given by $(\ref{geodintcurv})$ and $t_{k}$ is obtained from $s_{k}$ as in $(\ref{changeofparameter})$, then	
	\begin{align*}
		d(\alpha_{p}(s_{k}),\alpha_{p}(s_{l}))\leq\frac{1}{\sqrt{c}}|s_{k}-s_{l}|,
	\end{align*}
	and then $(\alpha_{p}(s_{k}))_{k}$ converges to a point $p_{0}\in M$. By continuity, $(\ref{estimate})$ implies that $|\nabla f(p_{0})|\geq\sqrt{c}$, and then $p_{0}$ is a regular point. This is a contradiction, since the integral curve $\alpha_{p_{0}}$ of $\nabla f/|\nabla f|^2$ through $p_{0}$ extends $\alpha_{p}$ past $\omega_{2}(p)$, contradicting the maximality of $\omega_{2}(p)$. Then $\omega_{2}(p)=+\infty$, for any regular point $p\in M$ of $f$.
	
	To see that $f$ is unbounded above, use $(\ref{lindepen})$ and the first part of this proof.
\end{proof}

The next proposition gives a necessary condition for a solution of $(\ref{MainIneq})$ to be constructed as in $(\ref{composition})$.

%A simple condition for a solution of $(\ref{MainIneq})$ to be as the one constructed in $(\ref{phiEQ})$ is that it must be positive. A further restriction is given int the next proposition. It says that if $b(f)=|\nabla f|^2$, then $b'(f)\in[\min\{2\lambda,4\lambda\},\max\{2\lambda,4\lambda\}]$.

\begin{proposition}\label{pp}
	Let $(M^n,g,f,\lambda)$, $\lambda\neq0$, be a complete Schouten soliton with $f$ nonconstant and $b:(\omega_{1},\omega_{2})\rightarrow\mathbb{R}$ defined as in $(\ref{composition})$. Then 
	\begin{align}\label{goalTRANSC}
		(b'(s)-2\lambda)(b'(s)-4\lambda)\leq0,\ \forall s\in(\omega_{1},\omega_{2}).
	\end{align}
\end{proposition}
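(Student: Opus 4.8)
The plan is to argue by contradiction, combining the estimates for positive solutions of $(\ref{MainIneq})$ obtained in Section \ref{preliminariesresults} with the completeness of $M$. By Remark \ref{simply} we may assume $\lambda>0$, and then $(\ref{goalTRANSC})$ is equivalent to $2\lambda\leq b'(s)\leq 4\lambda$ for all $s$. So suppose there is $\tilde{s}\in(\omega_{1},\omega_{2})$ with $(b'(\tilde{s})-2\lambda)(b'(\tilde{s})-4\lambda)>0$, that is, either $b'(\tilde{s})>4\lambda$ or $b'(\tilde{s})<2\lambda$. Two facts will be used repeatedly: Proposition \ref{P2} gives $\omega_{2}=+\infty$; and, by $(\ref{lindepen})$, $(\ref{dist})$ and $(\ref{geodintcurv})$, the arc-length reparametrization of $\alpha$ is a geodesic whose length over an $s$-interval $J$ equals $\int_{J}d\xi/\sqrt{b(\xi)}$ and along which $f$ is affine, with $f(\alpha(s))=s+\text{const}$. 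In particular, whenever that integral is finite on a half-line, completeness forces the geodesic to converge to a point of $M$, while affineness of $f$ makes $f$ tend to $\pm\infty$ there, which is absurd.

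Case $b'(\tilde{s})>4\lambda$. By item $(\ref{item2})$ of Lemma \ref{impRemark2} we get $b'(s)>4\lambda>2\lambda$ on $[\tilde{s},\omega_{2})$, so both $(\ref{condI})$ and $(\ref{condII})$ hold at $\tilde{s}$; Proposition \ref{PROP1} then gives $\kappa,K>0$ with $b(s)\geq K^{2}e^{2\kappa s}$ on $[\tilde{s},+\infty)$. Hence $\int_{\tilde{s}}^{+\infty}d\xi/\sqrt{b(\xi)}\leq K^{-1}\int_{\tilde{s}}^{+\infty}e^{-\kappa\xi}\,d\xi<+\infty$, so the reparametrized geodesic has finite length as $s\to+\infty$ and converges to some $p_{\infty}\in M$; but then $f(p_{\infty})=\lim_{s\to+\infty}(s+\text{const})=+\infty$, a contradiction.

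Case $b'(\tilde{s})<2\lambda$. Since $\lambda(b'(\tilde{s})-2\lambda)<0$, Proposition \ref{C2} gives $c_{0}>0$ with $b(s)\geq c_{0}$ on all of $(\omega_{1},\omega_{2})$. If $\omega_{1}$ is finite, then for a sequence $s_{k}\downarrow\omega_{1}$, estimate $(\ref{dist})$ together with $b\geq c_{0}$ shows $(\alpha(s_{k}))_{k}$ is Cauchy, so it converges to a regular point $p_{0}$ with $|\nabla f(p_{0})|\geq\sqrt{c_{0}}>0$; the integral curve of $\nabla f/|\nabla f|^{2}$ through $p_{0}$ extends $\alpha$ past $\omega_{1}$, contradicting maximality (exactly as in Proposition \ref{P2}). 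If $\omega_{1}=-\infty$, item $(\ref{item2})$ of Lemma \ref{impRemark2} gives $b'(s)\leq b'(\tilde{s})<2\lambda$ on $(-\infty,\tilde{s}]$, and I would split further. If $b'(s_{1})<0$ for some $s_{1}\leq\tilde{s}$, then $(\ref{condI})$ and $(\ref{condII})$ hold at $s_{1}$, so Proposition \ref{PROP1} gives $b(s)\geq K^{2}e^{-2\kappa s}$ on $(-\infty,s_{1}]$; then $\int_{-\infty}^{s_{1}}d\xi/\sqrt{b(\xi)}<+\infty$ and, as above, the geodesic converges as $s\to-\infty$ to a point where $f$ would equal $-\infty$, a contradiction. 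Otherwise $b'(s)\geq0$ on $(-\infty,\tilde{s}]$, hence $0\leq b'(s)\leq b'(\tilde{s})<2\lambda$ there; so $b$ is nondecreasing with $b(s)\leq b(\tilde{s})$, and, since $x\mapsto(x-2\lambda)(x-4\lambda)$ is decreasing on $(-\infty,3\lambda]$, we get $(b'(s)-2\lambda)(b'(s)-4\lambda)\geq\delta:=(b'(\tilde{s})-2\lambda)(b'(\tilde{s})-4\lambda)>0$. By $(\ref{theend})$, $b''(s)\geq\delta/b(\tilde{s})>0$ on $(-\infty,\tilde{s}]$, and integrating this from $s$ to $\tilde{s}$ forces $b'(s)\to-\infty$ as $s\to-\infty$, contradicting $b'(s)\geq0$.

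The step I expect to be the main obstacle is exactly this last subcase, $b'\geq0$ on $(-\infty,\tilde{s}]$: here Proposition \ref{PROP1} cannot be invoked because $(\ref{condII})$ fails, so one must extract the contradiction by hand, using the uniform convexity forced by $(\ref{theend})$ together with the upper bound $b\leq b(\tilde{s})$ coming from monotonicity. A secondary point requiring care, which recurs in several cases, is the justification that a geodesic of finite length in a complete manifold converges to an interior point, and that the affine behavior of $f$ along $\alpha$ then produces the contradiction.
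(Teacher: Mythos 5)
Your proof is correct and follows essentially the same route as the paper: the same case split on $b'(\tilde{s})>4\lambda$ versus $b'(\tilde{s})<2\lambda$, the same appeals to Lemma \ref{impRemark2}, Proposition \ref{C2} and Proposition \ref{PROP1}, and the same uniform-convexity argument via $(\ref{theend})$ to rule out $b'\geq0$ on $(-\infty,\tilde{s}]$ (this is exactly the paper's Claim \ref{claim}). The only cosmetic difference is that you derive the final contradiction from a finite-length curve converging to a point where $f$ would have to be infinite, while the paper integrates $\dot{f}\geq K_{0}e^{\pm\kappa_{0}f}$ to bound the arclength parameter; the two formulations are equivalent.
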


\begin{proof}
	Suppose by contradiction that $(\ref{goalTRANSC})$ is not true and let $\tilde{s}\in(\omega_{1},\omega_{2})$ satisfying $(\ref{condI})$. In what follows we consider $\lambda>0$, otherwise use Remark \ref{simply}.
	
	Assume that $b'(\tilde{s})>4\lambda>2\lambda$. Let $\gamma(t)$ be the reparametrization of $\alpha(s)$ given by $(\ref{geodintcurv})$. Proceeding as in the proof of Proposition \ref{P2} we can see that this curve is well defined in $[\tilde{t},+\infty)$, where $\gamma(\tilde{t})=\alpha(\tilde{s})$. Denote $f(\gamma(t))$ by $f(t)$, $t\in[\tilde{t},+\infty)$, and note that its derivative with respect to $t$ is $\f(t)=|\nabla f(\gamma(t))|$. Using Proposition \ref{PROP1}, $(\ref{ine1POS})$, we can find positive constants $K_{0}$ and $\kappa_{0}$ so that
	\begin{align*}
		\f(t)\geq K_{0}e^{\kappa_{0}f(t)},\ t\in[\tilde{t},+\infty),
	\end{align*}
	where we have used that $s=f(t)+\tilde{s}-f(\tilde{t})$. Integrating the above inequality from $0$ to $t$ we have
	\begin{align*}
		t-\tilde{t}\leq\frac{1}{\kappa_{0}K_{0}}(e^{-\kappa_{0}f(\tilde{t})}-e^{-\kappa_{0}f(t)})\leq\frac{e^{-\kappa_{0}f(\tilde{t})}}{\kappa_{0}K_{0}},
	\end{align*}
	what gives a contradiction when $t\rightarrow+\infty$.
	
	Assume that $b'(\tilde{s})<2\lambda<4\lambda$. We will first prove the following claim.
	\begin{claim}\label{claim}
		$(\omega_{1},\omega_{2})=\mathbb{R}$ and there is $s_{1}<\tilde{s}$ satisfying $(\ref{condI})$ and $(\ref{condII})$.
	\end{claim}
	\begin{proof}
		Notice that according to Proposition \ref{C2} there is a positive constant $c_{0}$ so that $b(s)\geq c_{0},\ \forall s\in(\omega_{1},\omega_{2})$. Arguing as in the proof of Proposition \ref{P2}, this implies that $(\omega_{1},\omega_{2})=\mathbb{R}$. If we show the existence of $s_{1}<\tilde{s}$ so that $b'(s_{1})<0$, then the proof of the second part is finished. Assume by contradiction that $b$ is nondecreasing on $(-\infty,\tilde{s}]$. Then $(\ref{theend})$ implies that $b''(s)\geq\tilde{a},\ \forall s\in(-\infty,\tilde{s}]$, where
		\begin{equation*}
			\tilde{a}=\frac{(2\lambda-b'(\tilde{s}))(4\lambda-b'(\tilde{s}))}{b(\tilde{s})}>0,
		\end{equation*}
		once $b'(s)<2\lambda$ and $b''(s)>0$. But then $0\leq b'(s)\leq\tilde{a}s+b'(\tilde{s})-\tilde{a}\tilde{s}$, which provides a contradiction when $s\rightarrow-\infty$.
	\end{proof}
	Let $s_{1}$ be given as in Claim \ref{claim}. As $\omega_{2}=-\infty$, it follows that $\gamma(t)$ and $f(t)$, defined as before, are defined for $t\in(-\infty,t_{1}]$ where $\gamma(t_{1})=\alpha(s_{1})$. Using Proposition \ref{PROP1}, item $(\ref{ine2POS})$, we have
	\begin{align*}
		\f(t)\geq K_{0}e^{-\kappa_{0}f(t)},\ t\in(-\infty,t_{1}],
	\end{align*}
	for some constants $K_{0}$ and $\kappa_{0}$. Integrating the above inequality from $t$ to $t_{1}$ implies
	\begin{align*}
		t_{1}-t\leq\frac{1}{\kappa_{0}K_{0}}(e^{\kappa_{0}f(t_{1})}-e^{\kappa_{0}f(t)})\leq\frac{e^{\kappa_{0}f(t_{1})}}{\kappa_{0}K_{0}},
	\end{align*}
	what gives a contradiction when $t\rightarrow-\infty$.	
\end{proof}

\begin{remark}\label{optimal}
	Inequality $(\ref{goalTRANSC})$ is optimal. To see that consider the function $b(s)$ of Remark \ref{examplater}. Then $b'(s)=\frac{4(n-1)\lambda}{2(n-1)-k}$, with $k\in\{0,1,\cdots,n-1\}$, which clearly satisfy $(\ref{goalTRANSC})$, with equality if $k=0$ or $k=n-1$.
\end{remark}
We are ready to present the proof of Theorem \ref{maintheorem}.

\begin{proof}[{\bf Proof of Theorem \ref{maintheorem}}]
	
	Assume that $\lambda>0$, otherwise we use Remark \ref{simply} to go back to this case.
	\begin{itemize}
		\item {\bf Proof of (\ref{positivity}).}
		
		Assume that $p$ is a regular point of $f$. Let $b(s)$ be the function defined as $(\ref{composition})$ and $s_{0}\in(\omega_{1}(p),\omega_{2}(p))$ with $\alpha(s_{0})=p$. Then $b'(s_{0})=\displaystyle\frac{R(p)}{n-1}+2\lambda$, and using $(\ref{goalTRANSC})$ we have
		\begin{align}\label{step}
			0\leq R(p)\leq2(n-1)\lambda,
		\end{align}
		as desired.	Now assume that $p$ is a critical point of $f$. If we can approximate $p$ with a sequence of regular points of $f$, the result follows by continuity. Then we will show that this is always possible. In order to do that, assume by contradiction that the set $\mathcal{C}$ of critical points of $f$ has nonempty interior. Let $q$ be a point in the boundary of the interior of $\mathcal{C}$. By approximating $q$ by a sequence in the interior of $\mathcal{C}$ we get
		\begin{align}\label{stepp}
			R(q)=\frac{2(n-1)n\lambda}{n-2},
		\end{align}
		where we have used $(\ref{trace})$ and the fact that $\Delta f$ vanishes in the interior of $\mathcal{C}$. On the other hand, by approximating $q$ by a sequence lying outside $\mathcal{C}$ and using $(\ref{step})$ we get
		\begin{align}\label{steppp}
			R(q)\leq2(n-1)\lambda.
		\end{align}
		If we put $(\ref{stepp})$ and $(\ref{steppp})$ together we get a contradiction.
		\item {\bf Proof of $f(M)=[f_{0},+\infty)$.}
		
		By Proposition \ref{P1} and Proposition \ref{P2} we only need to show that $f$ has a critical point. Let $p\in M$ be a regular point of $f$ and $b:(\omega_{1},+\infty)\rightarrow\mathbb{R}$ the corresponding solution of $(\ref{MainIneq})$. By Proposition \ref{pp} we have $2\lambda\leq b'(s)\leq4\lambda,\ \forall s\in(\omega_{1},+\infty)$, which gives
		\begin{align}\label{goal}
			2\lambda(s_{2}-s_{1})\leq b(s_{2})-b(s_{1})\leq4\lambda(s_{2}-s_{1}),
		\end{align}
		with $s_{1}\leq s_{2}$ in $(\omega_{1},+\infty)$. We claim that $\omega_{1}>-\infty$. Suppose by contradiction that $\omega_{1}=-\infty$. Using $(\ref{goal})$ we have for $s\in(-\infty,s_{2}]$
		\begin{align*}
			b(s)\leq2\lambda s+b(s_{2})-2\lambda s_{2},
		\end{align*}
		showing that $b(s)$ must become negative somewhere below $s_{2}$, which is a contradiction.
		
		Now let $(s_{k})_{k}$ be a sequence so that $\displaystyle\lim_{k\rightarrow+\infty}s_{k}=\omega_{1}$. It follows from $(\ref{goal})$ that $(b(s_{k}))_{k}$ is a Cauchy sequence, and hence it converges to $b_{0}\in[0,+\infty)$. It is not hard to see that $b_{0}$ does not depend on the choice of the sequence $(s_{k})_{k}$. Letting $s_{1}\rightarrow\omega_{1}$ in $(\ref{goal})$ we obtain
		\begin{align}\label{coolineq}
			2\lambda(s-\omega_{1})+b_{0}\leq b(s)\leq4\lambda(s-\omega_{1})+b_{0}.
		\end{align}
		On the other hand, if $s_{l}<s_{k}$ it follows from $(\ref{dist})$ and $(\ref{coolineq})$, that
		\begin{align*}
			\begin{split}
				d(\alpha(s_{k}),\alpha(s_{l}))&\leq\displaystyle\int_{s_{l}}^{s_{k}}\frac{ds}{\sqrt{2\lambda(s-\omega_{1})+b_{0}}}\\\noalign{\smallskip}
				&=\sqrt{\frac{2}{\lambda}}\left(\sqrt{s_{k}+C}-\sqrt{s_{l}+C}\right),\
			\end{split}
		\end{align*}
		with $C=\displaystyle\frac{b_{0}}{2\lambda}-\omega_{1}$. This shows that $(\alpha(s_{k}))_{k}$ is a Cauchy sequence and then it converges to a certain $p_{0}\in M$. Again, $p_{0}$ does not depend on the choice of the sequence. Then, $\displaystyle\lim_{s\rightarrow\omega_{1}}|\nabla f(\alpha(s))|^2=|\nabla f(p_{0})|^2=b_{0}$. Now, since $\omega_{1}>-\infty$ and $(\omega_{1},+\infty)$ is the maximal interval of definition of $b(s)$, we must have $|\nabla f(p_{0})|^2=b_{0}=0$, and then $\displaystyle f(p_{0})=f_{0}=\min_{M}{f}$.
		
		\item {\bf Proof of (\ref{gradestimate}).}
		
		Let $p\in M$ be a regular point of $f$, $b:(\omega_{1},+\infty)\rightarrow\mathbb{R}$ the corresponding solution of $(\ref{MainIneq})$ and $p_{0}\in M$ a critical point of $f$ as constructed above. Let $s_{2}\in(\omega_{1},+\infty)$ so that $\alpha(s_{2})=p$. Using $(\ref{lindepen})$ and $b(s)=|\nabla f(\alpha(s))|^2$ in $(\ref{goal})$ and taking $s_{1}\rightarrow\omega_{1}$ we have the desired inequality at $p$, namely
		\begin{align*}
			2\lambda(f(p)-f_{0})\leq |\nabla f(p)|^2\leq4\lambda(f(p)-f_{0}).
		\end{align*}
		
		If $p\in M$ is a critical point of $f$, then $(\ref{gradestimate})$ is trivially satisfied since, as we have seen, $f(p)=f_{0}$ and $\nabla f(p)=0$.
	\end{itemize}
\end{proof}

\section{Growth Estimates}\label{growthvolball}

In this section we show that $(\ref{positivity})$ and $(\ref{gradestimate})$ can be used to investigate Schouten solitons in a similar way $(\ref{ham_identity})$ is used to understand Ricci solitons. This is done by proving Theorem \ref{maintheorem2} and Theorem \ref{maintheorem3}, whose versions to Ricci solitons were proved in \cite{caozhou}.

\begin{proof}[{\bf Proof of Theorem \ref{maintheorem2}}]
	Let $q\in M$ and $d(q)=d(p,q)$. We follow closely the proof of Proposition 1.2 in \cite{caozhou}. It follows from the second inequality of $(\ref{gradestimate})$ that 
	\begin{align}\label{boundgrad}
		\left|\nabla\sqrt{f-f_{0}}\right|\leq\sqrt{|\lambda|}.
	\end{align}
	This implies that $\sqrt{f-f_{0}}$ is Lipschitz, and then
	\begin{align*}
		\displaystyle f(p)\leq|\lambda|\left(d(p)+\frac{f(q)-f_{0}}{\sqrt{|\lambda|}}\right)^2+f_{0}.
	\end{align*}
	
	Assume that $\lambda>0$. Fix $q\in M$, let $p\in M$ be a point so that $d(p,q)=l>2$ and let $\gamma:[0,l]\rightarrow M$ be a minimizing geodesic, with $\gamma(0)=q$ and $\gamma(l)=p$. Denoting $f(\gamma(s))$ by $f(s)$ and its derivative by $\f(s)$ we get
	\begin{align}\label{uniuse}
		\begin{split}
			\f(l-1)-\f(1)&=\displaystyle\int_{1}^{l-1}\nabla\nabla f(\g(s),\g(s))ds\\ \noalign{\smallskip}
			&=\displaystyle\int_{1}^{l-1}\left(\frac{R}{2(n-1)}+\lambda-Ric(\g(s),\g(s))\right)ds\\ \noalign{\smallskip}
			&\geq\lambda(l-2)-\displaystyle\int_{1}^{l-1}Ric(\g(s),\g(s))ds.
		\end{split}
	\end{align}
	
	In order to estimate the last term in the inequality above we use the second variational formula. Since $\gamma:[0,l]\rightarrow M$ is minimizing, for any continuous piecewise regular function $\varphi:[0,l]\rightarrow\mathbb{R}$ satisfying $\varphi(0)=\varphi(l)=0$ one has
	\begin{align}
		\displaystyle\int_{0}^{l}(\varphi(s))^2Ric(\g(s),\g(s))ds\leq(n-1)\int_{0}^{l}|\p(s)|^2ds.
	\end{align}
	Consider the function
	\begin{align}
		\varphi(s)=
		\left\{ 
		\begin{array}{lll}
			s, & \mbox{if} & s\in[0,1],\\ \noalign{\smallskip}
			1, & \mbox{if} & s\in[1,l-1],\\ \noalign{\smallskip}
			l-s, & \mbox{if} & s\in[l-1,l].
		\end{array} 
		\right.
	\end{align}
	Then
	\begin{align*}
		\displaystyle\int_{1}^{l-1}Ric(\g,\g)ds&=\int_{0}^{l}\varphi^2Ric(\g,\g)ds-\int_{0}^{1}\varphi^2Ric(\g,\g)ds-\int_{l-1}^{l}\varphi^2Ric(\g,\g)ds\\ \noalign{\smallskip}
		&\leq(n-1)\int_{0}^{l}|\p|^2ds+\max_{B_{q}(1)}\left|Ric\right|-\int_{l-1}^{l}\varphi^2Ric(\g,\g)ds\\ \noalign{\smallskip}
		&=2(n-1)+\max_{B_{q}(1)}\left|Ric\right|-\int_{l-1}^{l}\varphi^2Ric(\g,\g)ds.
	\end{align*}
	On the other hand, the last term above can be estimated in the following way
	\begin{align*}
		\displaystyle\int_{l-1}^{l}\varphi^2Ric(\g,\g)ds&\geq\frac{\lambda}{3}-\int_{l-1}^{l}(\varphi(s))^2\nabla\nabla f(\g(s),\g(s))ds\\ \noalign{\smallskip}
		&=\frac{\lambda}{3}-(\varphi(l))^2\f(l)+(\varphi(l-1))^2\f(l-1)-2\int_{l-1}^{l}\varphi(s)\f(s)ds\\ \noalign{\smallskip}
		&\geq\frac{\lambda}{3}+\f(l-1)-\max_{l-1\leq s\leq l}|\f(s)|,
	\end{align*}
	which gives
	\begin{align*}
		\displaystyle\int_{1}^{l-1}Ric(\g,\g)ds\leq2(n-1)-\frac{\lambda}{3}-\f(l-1)+\max_{B_{q}(1)}\left|Ric\right|+\max_{l-1\leq s\leq l}|\f(s)|,
	\end{align*}
	and by $(\ref{uniuse})$,
	\begin{align}\label{then}
		\begin{split}
			\max_{l-1\leq s\leq l}|\f(s)|&\geq\lambda l-2(n-1)-\frac{5\lambda}{3}+\f(1)-\max_{B_{q}(1)}\left|Ric\right|.
		\end{split}
	\end{align}
	For each $s\in[0,l]$, using $(\ref{boundgrad})$, we have
	\begin{align*}
		\begin{split}
			\left|\sqrt{f(s)-f_{0}}-\sqrt{f(l)-f_{0}}\right|\leq\sqrt{\lambda}(l-s)\leq\sqrt{\lambda},
		\end{split}
	\end{align*}
	and then $\sqrt{f(s)-f_{0}}\leq\sqrt{f(l)-f_{0}}+\sqrt{\lambda}$, which for $(\ref{gradestimate})$ gives
	\begin{align}\label{then2}
		\begin{split}
			\max_{l-1\leq s\leq l}|\f(s)|\leq\max_{l-1\leq s\leq l}\sqrt{4\lambda(f(s)-f_{0})}\leq\sqrt{4\lambda(f(l)-f_{0})}+2\lambda.
		\end{split}
	\end{align}
	Analogously
	\begin{align}\label{then3}
		\begin{split}
			|\f(1)|\leq\max_{B_{q}(1)}\left|\nabla f\right|\leq\max_{B_{q}(1)}\sqrt{4\lambda(f-f_{0})}+2\lambda.
		\end{split}
	\end{align}
	Putting $(\ref{then})$, $(\ref{then2})$ and $(\ref{then3})$ together we conclude that
	\begin{align*}
		f(p)\geq\frac{\lambda}{4}(d(p,q)-A_{2})^2+f_{0},
	\end{align*}
	where
	\begin{align*}
		A_{2}=\frac{1}{\lambda}\left(2(n-1)+\frac{17\lambda}{3}+\max_{B_{q}(1)}\left\{\sqrt{4\lambda(f-f_{0})}+\left|Ric\right|\right\}\right)
	\end{align*}
\end{proof}

Now we will prove the growth estimate for the volume of geodesic balls of complete shrinking Schouten solitons, announced in Theorem \ref{maintheorem3}.

Fix $q\in M$, consider $r(p)=d(p,q)$ and $\rho(p)=2\sqrt{f(p)-f_{0}}$. According to $(\ref{boundgrad})$ we have
\begin{align}\label{rhoandf}
	\nabla\rho=\frac{1}{\sqrt{f-f_{0}}}\nabla(f-f_{0}),
\end{align}
and if $r$ is large enough $(\ref{ineq})$ imply that
\begin{align}\label{equivalent}
	\sqrt{\lambda}(r(p)-A)\leq\rho(p)\leq2\sqrt{\lambda}(r(p)+A),
\end{align}
where $A=\max\{A_{1},A_{2}\}$. According to Theorem \ref{maintheorem2}, the set $D(r)=\{p\in M;\rho(p)<r\}$ is precompact and then has finite volume, that is
\begin{align}\label{coarea}
	V(r)=\displaystyle\int_{\overline{D(r)}}dV=\int_{0}^{r}\left(\int_{\partial D(s)}\frac{dS}{|\nabla\rho|}\right)ds<+\infty,
\end{align}
where in the second equality we used the Coarea Formula.

In the next result we present estimates for $V(r)$. This is going to be important in the proof of Theorem \ref{maintheorem3}

\begin{proposition}\label{prop1}
	Let $(M,g,f,\lambda)$ be a complete noncompact shrinking Schouten soliton. Consider the real numbers $\theta=\displaystyle\sup_{p\in M}R(p)$, $\delta=\displaystyle\inf_{p\in M}R(p)$ and $r_{1}>0$. Then
	\begin{align}\label{target}
		\displaystyle \left(\frac{V(r_{1})}{r_{1}^{\frac{n}{2}-\frac{(n-2)\theta}{4(n-1)\lambda}}}\right)r^{\frac{n}{2}-\frac{(n-2)\theta}{4(n-1)\lambda}}\leq V(r)\leq\left(\frac{V(r_{1})}{r_{1}^{n-\frac{(n-2)\delta}{2(n-1)\lambda}}}\right)r^{n-\frac{(n-2)\delta}{2(n-1)\lambda}},
	\end{align}
	for any $r>r_{1}$.
\end{proposition}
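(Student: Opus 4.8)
The plan is to follow the scheme of Cao and Zhou \cite{caozhou}: convert the pointwise bounds of Theorem \ref{maintheorem} into a logarithmic-derivative estimate for $V(r)$ and then integrate it. So the argument is really a Gronwall-type comparison once the right identity for $\int_{D(r)}\Delta f\,dV$ is in place.

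First I would collect what Theorem \ref{maintheorem} gives in the present notation. Dividing $(\ref{gradestimate})$ by $f-f_{0}>0$ and using $(\ref{rhoandf})$ yields
\begin{align*}
	2\lambda\leq|\nabla\rho|^{2}\leq4\lambda\qquad\text{on }\{f>f_{0}\},
\end{align*}
so $\rho$ has no critical points off the minimum set of $f$. Hence, by the precompactness coming from Theorem \ref{maintheorem2}, for every $r>0$ the set $\overline{D(r)}=\{\rho\leq r\}$ is a compact manifold with smooth boundary $\partial D(r)=\rho^{-1}(r)$, the integrand in $(\ref{coarea})$ depends continuously on $s$, and therefore $V\in C^{1}((0,\infty))$ with $V'(r)=\int_{\partial D(r)}dS/|\nabla\rho|$; note also $V(r)>0$ since $D(r)$ is a nonempty open set (it contains a minimum point of $f$, which exists by Theorem \ref{maintheorem}). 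Moreover $\delta,\theta\in[0,2(n-1)\lambda]$ by Theorem \ref{maintheorem}, so $(\ref{trace})$ gives $\Delta f=n\lambda-\tfrac{n-2}{2(n-1)}R\geq2\lambda>0$, and more precisely
\begin{align*}
	n\lambda-\frac{(n-2)\theta}{2(n-1)}\leq\Delta f\leq n\lambda-\frac{(n-2)\delta}{2(n-1)}.
\end{align*}

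The heart of the argument is a divergence-theorem identity on $D(r)$. Since the outward unit normal of $\partial D(r)$ is $\nu=\nabla\rho/|\nabla\rho|=\nabla f/|\nabla f|$ and $\sqrt{f-f_{0}}=r/2$ there, $(\ref{rhoandf})$ gives $\langle\nabla f,\nu\rangle=|\nabla f|=\tfrac{r}{2}|\nabla\rho|$ on $\partial D(r)$, whence
\begin{align*}
	\int_{D(r)}\Delta f\,dV=\int_{\partial D(r)}\langle\nabla f,\nu\rangle\,dS=\frac{r}{2}\int_{\partial D(r)}|\nabla\rho|\,dS.
\end{align*}
From $2\lambda\leq|\nabla\rho|^{2}\leq4\lambda$ one has $\tfrac{|\nabla\rho|}{4\lambda}\leq\tfrac{1}{|\nabla\rho|}\leq\tfrac{|\nabla\rho|}{2\lambda}$ pointwise on $\partial D(r)$, so combining with the identity above,
\begin{align*}
	\frac{1}{2\lambda r}\int_{D(r)}\Delta f\,dV\leq V'(r)\leq\frac{1}{\lambda r}\int_{D(r)}\Delta f\,dV.
\end{align*}

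Finally, inserting the two-sided bound for $\Delta f$, i.e. estimating $\int_{D(r)}\Delta f\,dV$ between $\big(n\lambda-\tfrac{(n-2)\theta}{2(n-1)}\big)V(r)$ and $\big(n\lambda-\tfrac{(n-2)\delta}{2(n-1)}\big)V(r)$, I obtain for all $r>0$
\begin{align*}
	\frac{1}{r}\left(\frac{n}{2}-\frac{(n-2)\theta}{4(n-1)\lambda}\right)\leq\frac{V'(r)}{V(r)}\leq\frac{1}{r}\left(n-\frac{(n-2)\delta}{2(n-1)\lambda}\right),
\end{align*}
and integrating this pair of inequalities from $r_{1}$ to $r$ gives exactly $(\ref{target})$. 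The only genuinely delicate points are the $C^{1}$ regularity of $V$ and the applicability of the divergence theorem on $D(r)$; both rest on the smoothness of the level sets $\rho^{-1}(r)$, which is precisely where the uniform lower bound $|\nabla\rho|\geq\sqrt{2\lambda}$ furnished by Theorem \ref{maintheorem} is essential. Everything else is a routine computation.
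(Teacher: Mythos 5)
Your argument is correct and is essentially the paper's own proof: both integrate the trace identity $(\ref{trace})$ over $D(r)$, apply the divergence theorem and the coarea formula, and use the two-sided gradient estimate $(\ref{gradestimate})$ to sandwich the boundary integral $\int_{\partial D(r)}|\nabla f|\,dS$ between $\lambda r V'(r)$ and $2\lambda r V'(r)$, after which the logarithmic differential inequality integrates to $(\ref{target})$. Your rearrangement via the pointwise bounds $2\lambda\leq|\nabla\rho|^2\leq4\lambda$, and your added remarks on the $C^{1}$ regularity of $V$ and the smoothness of the level sets $\rho^{-1}(r)$, are only cosmetic refinements of the same computation.
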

\begin{proof}
	Integrating $(\ref{trace})$ on the set $D(r)$ and using the Divergence Theorem gives
	\begin{align}\label{traceuseful}
		\int_{\partial D(r)}|\nabla(f-f_{0})|dS= n\lambda V(r)-\frac{n-2}{2(n-1)}\int_{\overline{D(r)}}RdV.
	\end{align}
	On the other hand, $(\ref{coarea})$ gives
	\begin{align*}
		\begin{split}
			\displaystyle \frac{dV}{dr}(r)&=\int_{\partial D(r)}\frac{dS}{|\nabla\rho|}%=\int_{\partial D(r)}\frac{\rho dS}{2|\nabla(f-f_{0})|}=\frac{r}{2}\int_{\partial D(r)}\frac{dS}{|\nabla(f-f_{0})|}.
		\end{split}
	\end{align*}
	Taking $(\ref{rhoandf})$ into account, 
	the first inequality of $(\ref{gradestimate})$ gives
	\begin{equation*}\label{part}
		\begin{split}
			\displaystyle\int_{\partial D(r)}|\nabla(f-f_{0})|dS&\geq2\lambda\int_{\partial D(r)}\frac{(f-f_{0})dS}{|\nabla(f-f_{0})|}=\lambda r\int_{\partial D(r)}\frac{dS}{|\nabla\rho|}=\lambda r\frac{dV}{dr},
		\end{split}
	\end{equation*}
	and the second inequality of $(\ref{gradestimate})$, analogously, gives
	\begin{align*}\label{part1}
		\begin{split}
			\displaystyle\int_{\partial D(r)}|\nabla(f-f_{0})|dS&\leq2\lambda r\frac{dV}{dr}.
		\end{split}
	\end{align*}
	Plugging all this with $(\ref{traceuseful})$ implies
	\begin{align*}
		\lambda r\frac{dV}{dr}\leq n\lambda V-\frac{n-2}{2(n-1)}\int_{\overline{D(r)}}RdV\leq\left(n\lambda-\frac{(n-2)\delta}{2(n-1)}\right)V,
	\end{align*}
	and
	\begin{align*}
		2\lambda r\frac{dV}{dr}\geq n\lambda V-\frac{n-2}{2(n-1)}\int_{\overline{D(r)}}RdV\geq\left(n\lambda-\frac{(n-2)\theta}{2(n-1)}\right)V.
	\end{align*}
	Integrating these inequalities from $r_{1}>0$ to $r>r_{1}$ we get $(\ref{target})$.
\end{proof}

As a consequence of Proposition \ref{prop1} we have

\begin{proof}[{\bf Proof of Theorem \ref{maintheorem3}}]
	It follows from $(\ref{equivalent})$ that $D(\sqrt{\lambda}(r-A))\subset B_{q}(r)\subset D(2\sqrt{\lambda}(r+A))$ for $r$ sufficiently large. Taking $r_{1}>0$ large enough, it follows from Proposition \ref{prop1} that there are positive constants $C_{1}$ and $C_{2}$ so that
	\begin{align*}
		C_{1}r^{\frac{n}{2}-\frac{(n-2)\delta}{4(n-1)\lambda}}\leq V(\sqrt{\lambda}(r-A))\leq Vol(B_{q}(r))\leq V(2\sqrt{\lambda}(r+A))\leq C_{2}r^{n-\frac{n-2}{2(n-1)\lambda}\theta},
	\end{align*}
	for every $r>r_{1}$. In particular, since $\delta\leq2(n-1)\lambda$, we have
	\begin{align*}
		Vol(B_{q}(r))\geq C_{1}r,
	\end{align*}
	for all $r\geq r_{1}$. This completes the proof.
\end{proof}

\end{document}